\documentclass[pdftex,pdfversion=1.4,a4paper,openright,twoside,11pt]{article}
\usepackage[a4paper,left=3.5cm,right=3cm]{geometry}

\usepackage[T1]{fontenc}
\usepackage[english]{babel}
\usepackage[utf8]{inputenc}
\usepackage{amsmath}
\usepackage{wasysym}
\usepackage{amsfonts}
\usepackage{amssymb}
\usepackage{amsthm}
\usepackage{comment}
\usepackage{authblk}
\usepackage{mathtools}
\usepackage{xcolor}
\usepackage[shortlabels]{enumitem}
\usepackage{graphicx}
\usepackage{dsfont}
\usepackage{textcomp}

\pagestyle{plain}

\theoremstyle{plain}
\newtheorem{theorem}{Theorem}[section]
\newtheorem{prop}[theorem]{Proposition}
\newtheorem{lemma}[theorem]{Lemma}
\newtheorem{cor}[theorem]{Corollary}
\newtheorem*{theorem*}{Theorem}
\newtheorem*{prop*}{Proposition}
\newtheorem*{lemma*}{Lemma}
\newtheorem*{cor*}{Corollary}

\graphicspath{{./Immagini/}}

\theoremstyle{definition}
\newtheorem{defi}[theorem]{Definition}
\newtheorem*{defi*}{Definition}

\theoremstyle{remark}
\newtheorem{oss}[theorem]{Remark}
\newtheorem*{oss*}{Remark}

\theoremstyle{remark}

\newcommand{\norm}[1]{\left\lVert#1\right\rVert}

\newcommand{\doubleR}{\mathds{R}}
\newcommand{\scriptC}{\mathcal{C}}
\newcommand{\epsi}{\varepsilon}
\newcommand{\scriptB}{\mathcal{B}}
\newcommand{\doubleC}{\mathds{C}}

\newcommand{\scriptG}{\mathcal{G}}

\newcommand{\scriptA}{\mathcal{A}}

\newcommand{\scriptM}{\mathcal{M}}

\DeclareMathOperator{\sgn}{sgn}

\DeclareMathOperator{\diag}{diag}

\DeclareMathOperator{\Image}{Im}

\DeclareMathOperator{\agInt}{a-int}

\title{On the determination of \textcolor{black}{Lagrange multipliers for a weighted} LASSO problem using geometric and convex analysis techniques}
\date{}
\author[1, 2, 3, 4]{Gianluca Giacchi \thanks{gianluca.giacchi2@unibo.it}}
\author[3]{Bastien Milani \thanks{bastien.milani@chuv.ch}}
\author[2,4]{Benedetta Franceschiello \thanks{benedetta.franceschiello@hevs.ch}}
\affil[1]{Università di Bologna, Dipartimento di Matematica, Piazza di Porta San Donato 5, 40126 Bologna, Italy}
\affil[2]{Institute of Systems Engineering, School of Engineering, HES-SO Valais-Wallis, Rue de l'Industrie 23, 1950 Sion, Switzerland}
\affil[3]{Lausanne University Hospital and University of Lausanne, Lausanne, Department of Diagnostic and Interventional Radiology, Rue du Bugnon 46, Lausanne 1011, Switzerland}
\affil[4]{The Sense Innovation and Research Center, Avenue de Provence 82 1007, Lausanne and Ch. de l’Agasse 5, 1950 Sion, Switzerland}



\begin{document}
\maketitle

\begin{abstract}
\textcolor{black}{Compressed Sensing (CS) encompasses a broad array of theoretical and applied techniques for recovering signals, given partial knowledge of their coefficients, cf. \cite{Candes, CRT, Donoho, DET}. Its applications span various fields, including mathematics, physics, engineering, and several medical sciences, cf. \cite{AH, Berk2, Brady, Chan, Arce, Gao, Liu, MW, Marim, Wang, Zhu}. Motivated by our interest in the mathematics behind Magnetic Resonance Imaging (MRI) and CS, we employ convex analysis techniques to analytically determine equivalents of Lagrange multipliers for optimization problems with inequality constraints, specifically a weighted LASSO with voxel-wise weighting. We investigate this problem under assumptions on the fidelity term $\norm{Ax-b}_2^2$, either concerning the sign of its gradient or orthogonality-like conditions of its matrix. To be more precise, we either require the sign of each coordinate of $2(Ax-b)^TA$ to be fixed within a rectangular neighborhood of the origin, with the side lengths of the rectangle dependent on the constraints, or we assume $A^TA$ to be diagonal. The objective of this work is to explore the relationship between Lagrange multipliers and the constraints of a weighted variant of LASSO, specifically in the mentioned cases where this relationship can be computed explicitly. As they scale the regularization terms of the weighted LASSO, Lagrange multipliers serve as tuning parameters for the weighted LASSO, prompting the question of their potential effective use as tuning parameters in applications like MR image reconstruction and denoising. This work represents an initial step in this direction.}
\end{abstract}

{\bf Keywords} Tuning parameters, LASSO, convex optimization, Lagrange duality, MRI, Compressed Sensing\\
{\bf Mathematics Subject Classification} 15A29, 47N10, 47A52, 49K99, 49N45, 65F20, 65F22, 92C55


\section{Introduction}
\textcolor{black}{Basis Pursuit is a well-known convex minimization problem that was first introduced by F. Santosa and W. W. Symes in 1986, cf. \cite{SS}, in its simplest formulation:
\begin{equation}\label{lassetto1}
	\text{minimize} \quad \norm{x}_1+\lambda\norm{Ax-b}_2^2, \qquad x\in\mathds{R}^{n},
\end{equation}
where $A\in\mathds{R}^{m\times n}$, the so-called \textit{design matrix}, and $b\in\mathds{R}^m$ are fixed. The same problem was later applied to signal processing by S. S. Chen and D. Donoho in 1994, cf. \cite{CD}. In 1996, R. Tibshirani re-introduced it as linear regression method, under the name of LASSO. Namely, in \cite{T}, they consider the constrained minimization problem:
\begin{equation}\label{Tiblasso}
	\text{minimize}\quad \norm{Ax-b}_2^2, \qquad x\in\doubleR^n, \ \norm{x}_1\leq\tau,
\end{equation}
for $\tau>0$ and
\[
    \norm{x}_1:=\sum_{j=1}^n|x_j|.
\]
We will discuss the equivalence between (\ref{lassetto1}) and (\ref{Tiblasso}) in the following.\\
}

\textcolor{black}{Mathematical analysis approaches to study LASSO problems in all their facets are not new, and the literature is so vast that we can only limit ourselves to mention a few examples. The interested reader can find more in \cite{Berk, CDS, DVFP, MY, OH, OTH, SBA}. In \cite{Unser1, Unser2}, the authors study representation theorems for the solutions of general problems:
\[
	\arg\min_{x}E(b,\nu(x))+\gamma(\norm{x}),
\]
in the framework of Banach space theory, where $E$ is a loss functional, $\nu$ is a measurement mapping, $\gamma$ is a strictly increasing convex function and $\norm{\cdot}$ is a Banach norm, we refer to \cite[Theorem 2]{Unser1}, \cite[Theorem 2, Theorem 3]{Unser2} for more precise statements.
}
\textcolor{black}{
In \cite{Berk3}, the authors use convex analysis and variational calculus to study regularity properties of the mapping:
\[
	(b,\lambda)\in\mathds{R}^m\times(0,+\infty)\mapsto \arg\min_{x\in\mathds{R}^n}\frac{1}{2}\norm{Ax-b}_2^2+\lambda\norm{x}_1.
\]
}

\textcolor{black}{As aforementioned}, in its simplest definition, LASSO consists in the minimization of the function:
\begin{equation}\label{lassetto}
	\norm{Ax-b}_2^2+\lambda\norm{x}_1,
\end{equation}
where $A\in\doubleR^{m\times n}$ and $b\in\doubleR^m$ is a measurement vector. \textcolor{black}{Clearly, problems (\ref{lassetto1}) and (\ref{lassetto}) have the same minimizers and, therefore, for the purposes of this work, we will consider them as the same minimization problem. In short, (\ref{lassetto1}) and (\ref{lassetto}) can be interpreted as regularization problems,} where the aim is to minimize simultaneously the fidelity term $\norm{Ax-b}_2^2$, that measures noise, and the regularization term $\norm{ x}_1$, that enforces sparsity. \textcolor{black}{Recall that a vector $x=(x_1,\ldots,x_n)$ is $s-$\textbf{sparse} if $\text{card}\{j : x_j\neq0\}\leq s$. When $s$ is clear from the context or irrelevant, we drop $s$ and say that $x$ is \textit{sparse}. In several applications, $x$ is not sparse itself, but it is sparse with respect to a so-called \textit{sparsity-promoting transform} $\Phi:\doubleR^n\to\doubleR^N$. Stated differently, when $\Phi x$ is known to be sparse, problem (\ref{lassetto}) can be generalized to:
\begin{equation}\label{gLASSO}
    \text{minimize}\quad \norm{Ax-b}_2^2+\lambda\norm{\Phi x}_1, \qquad x\in\doubleR^n, 
\end{equation}
i.e. the regularization term $\norm{x}_1$ in (\ref{lassetto}) is replaced by $\norm{\Phi x}_1$.}
The parameter $\lambda>0$ in (\ref{lassetto}) acts as a tuning parameter that balances the contributions of the {fidelity} term $\norm{Ax-b}_2^2$ and regularization addendum $\norm{\Phi x}_1$: small values of $\lambda$ lower the contribution of the regularization, strengthening the effect of the fidelity term; vice-versa, large values of $\lambda$ make $\norm{Ax-b}_2^2$ negligible and force $\norm{\Phi x}_1$ to be small in order for the overall sum to be small. Consequently, solutions corresponding to $\lambda\ll1$ will be noisy, being close to the set $A^{-1}b$, while solutions $x^\#$ corresponding to $\lambda\gg1$ have more sparse $\Phi x^\#$. 
From this perspective, estimates of tuning parameters for inverse problems can be performed pursuing  different approaches. \textcolor{black}{ \textit{A posteriori rules} can be used when some a-priori knowledge on the amplitude of noise $e\in\doubleR^m$ is available, say $\norm{e}_2\leq\epsi$. For instance, using Morozov's discrepancy principle, $\lambda$ can be chosen so that a solution $x_\lambda$ of (\ref{lassetto}) satisfies $\norm{Ax_\lambda-b}_2\leq\epsi$, cf. \cite{CLPS, HHZS, LLL}. \textit{A priori rules} require knowledge of noise level, as before, but also a-priori information on the regularity of the solution. For this reason, a-priori approaches are usually bad suited for applications, cf. \cite{AMOS}.
Heuristic methods, such as the L-curve are also available, cf. \cite{CMRS, Hansen, LKB}. The L-curve method consists of choosing the optimal tuning parameter empirically by tracing a trade-off curve (the L-curve), whereas the generalized cross-validation (GCV) is a well-performing method that requires high-dimensional matrix calculus, cf. \cite{KNN, VMBM, GHW}. Other non-standard methods can be found in \cite{LPS, PCLS}, where the parameter is chosen so that statistical properties of noise, such as whiteness, are optimized; an implementation that avoids the computation of matrix inverses can be found in \cite{BKBVGS}. CNN and other learning methods were deployed in \cite{HP, NDC}, while a more statistical point of view was adopted in \cite{CLPS2}.} \\

However, the very reason why $\lambda$ is interpreted as a trade-off between noise and sparsity, in (\ref{lassetto1}) and (\ref{lassetto}), is that it depends on estimates that are usually unavailable, such as a-priori upper bounds for the $\ell_1$ norm of the unknown \textcolor{black}{vector}, i.e. a-priori information on the sparsity of the solution, or upper bounds for the noise, cf. \cite{OTH}. 
\textcolor{black}{For $A\in\doubleR^{m\times n}$, $b\in\doubleR^m$ and $\eta\geq0$,} the function:
\begin{equation}\label{lagrangian}
	L(x,\lambda)=\norm{x}_1+\lambda(\norm{Ax-b}_2^2-\eta^2)
\end{equation}
is the Lagrangian associated to the constrained minimization problem:
\begin{equation}\label{lassettino}
	\text{minimize} \quad \norm{x}_1, \qquad \text{$x\in\doubleR^n$, $\norm{Ax-b}_2^2\leq\eta^2$},
\end{equation}
cf. \cite{BV}. \textcolor{black}{Roughly speaking, this entails that} (\ref{lassetto1}) and (\ref{lassettino}) are equivalent, up to choosing: 
\begin{equation}\label{ltau}
	\lambda=\lambda(\eta)
\end{equation}
or, equivalently, $\eta=\eta(\lambda)$, properly. Please note that $\eta$ may not be uniquely determined. \textcolor{black}{We refer to \cite[Proposition 3.2]{FR} for a more precise statement of this fact. Throughout this work, we call the parameter $\lambda$ in (\ref{ltau}) a \textit{Lagrange multiplier} associated to (\ref{lassettino}), since it plays the same role of Lagrange multipliers in optimization problems with equality constraints. We will use this terminology in a more general setting, see Section Definition \ref{defLOP} below.} \textcolor{black}{Since a slightly modified proof of \cite[Theorem 3.1]{FR} shows that a solution of (\ref{lassettino}), if unique, must be $m$-sparse, $\norm{\cdot}_1$ is said to \textit{enforce sparsity}. For this reason, the Lagrange multiplier in (\ref{lagrangian}) could be used, in principle, in an equivalent manner as a \textit{tuning parameter} for (\ref{lassetto1}) to recover sparse vectors}. \\

\textcolor{black}{In the same way,
\begin{equation}\label{lagrange2}
	L(x,\lambda)=\norm{Ax-b}_2^2+\lambda(\norm{\Phi x}_1-\tau)
\end{equation}
is the Lagrange function of the constrained problem:
\begin{equation}\label{lagrange23}
	\text{minimize} \qquad \norm{Ax-b}_2^2, \qquad \text{$x\in\doubleR^n$, $\norm{\Phi x}_1\leq\tau$}.
\end{equation}
A first question that may be addressed is whether the corresponding Lagrange multiplier $\lambda$ of (\ref{lagrange23}) could still be used as a tuning parameter in (\ref{gLASSO}). If so,  
the relationship between the Lagrange multipliers and the constraints of the corresponding constrained problems could be useful in concrete applications, such as static and dynamic MRI, cf. \cite{DsPCSY,FDsMIDZNBJYSM}.  
In MRI, indeed,} vectors {of interest} are MR images, which tend to be approximately sparse with respect to the discrete Fourier transform (DFT), the discrete cosine transform (DCT) or the discrete wavelet transform (DWT), cf. \cite{Lustig}. This means that a solution of the generalized LASSO problem (\ref{gLASSO}), where the {design matrix} $A$ \textcolor{black}{is a proxy of the acquisition methods properties (coil sensitivity, undersampling schemes and DFT), and $b$ is an underdetermined, noisy measurement, will have a sparse regularization term, i.e. sparse $\Phi x$.} \textcolor{black}{We stress that (\ref{gLASSO}) is known to admit a, in general not unique, solution for any choice of $A$, $b$, $\lambda$ and $\Phi$, and for the sake of completeness we report the proof in the appendix.} In order to exploit more a-priori knowledge on the structure of MRI data, \textcolor{black}{(\ref{gLASSO}) can be generalized further to consider target functions that are sum of more regularizing terms, cf. \cite{Yerly, GVV, PV}}. \\

\textcolor{black}{Let us note that sparsity is not always the correct assumption in MRI.} For instance, dynamic MR images (e.g. a sequence of images of a moving organ, cf. \cite{DsPCSY,FDsMIDZNBJYSM}) are highly compressible, rather than sparse, cf. \cite{LDealtri}. \textcolor{black}{This means that most of their coefficients with respect to some sparsity-promoting transform do not vanish, yet are small or negligible}. \\

Surprisingly, it is easier to identify an equivalent of the relationship (\ref{ltau}), between the parameter $\lambda$ and \textcolor{black}{the upper bound for the constraint}, $\eta$, when another \textcolor{black}{weighted} version of LASSO is considered. \textcolor{black}{Namely, we aim to \textcolor{black}{utilize convex analysis to compute the Lagrange multipliers for the constrained optimization} problem:
\begin{equation}\label{constSVbasisP}
        \text{minimize} \quad \norm{Ax-b}_2^2, \qquad x\in\doubleR^n, \quad  |x_j|\leq\tau_j, \quad \text{$j=1,\ldots,n$}.
\end{equation}
For given $\tau_1,\ldots,\tau_n>0$ and a given minimizer $x^\#$ of (\ref{constSVbasisP}) there exist $\lambda_1,\ldots,\lambda_n\geq0$ such that $x^\#$ is also a minimizer of:
\begin{equation}\label{SVbasisP}
    \text{minimize}\quad \norm{Ax-b}_2^2+\sum_{j=1}^n\lambda_j|x_j|,
\end{equation}
see \cite[Section 5.3.2]{BV} or Theorem \ref{thm1} below for a complete statement.} Other \textcolor{black}{weighted} versions of this problem have been considered in the literature. For instance, in \cite{NDC}, the authors present a total variation (TV) regularization-based \textcolor{black}{weighted} LASSO for image denoising. Other references include \textcolor{black}{\cite{CLPS}, where the authors consider \textit{space-variant} problems, such as:
\[
    \text{minimize} \quad \frac{1}{2}\norm{Ax-b}_2^2+\sum_{j=1}^k\lambda_k\norm{(Dx)_j}_p,
\]
where $A\in\doubleR^{m\times n}$, $b\in\doubleR^m$, and $D$ is the discrete gradient, $p\in\{1,2\}$, and $\lambda_1,\ldots,\lambda_k>0$. In a certain sense, problem (\ref{SVbasisP}) can be considered as a space-variant problem, where every component of the unknown vector is weighted by a different parameter. In \cite{PCarxiv}, the author discusses the importance of space-variance in TV regularization, as a mathematical modeling which has the advantage of recovering a description of local features, which is lost by classical TV regularization, i.e. (\ref{gLASSO}) with $\Phi=D$.}\\

As we shall see, the relationship between these parameters \textcolor{black}{is non-trivial if $A$ is non orthogonal}, due to the complicated geometry of (\ref{constSVbasisP}). \textcolor{black}{Loosely speaking, this is due to the fact that if $A^TA$ is non-diagonal, $A$ shuffles the coordinates of $x$ in such a way that each pair of sets $M_j:=\{x\in\doubleR^n : x_j=-\tau_j\}$ and $N_j:=\{x\in\doubleR^n:\frac{\partial}{\partial x_j}(\norm{Ax-b}_2^2)=0\}$ are no longer parallel.}

As for the applicability of these results, the dependence on $\tau_1,\ldots,\tau_n$ of the Lagrange multipliers may turn out to be too restrictive, because these estimates are not generally available. However, we \textcolor{black}{presume} that in certain situations, such as denoising, the solution obtained using the estimated $\tau_j$'s leads to high-quality denoising.\\

We also point out that, unless $\tau_j=0$ for some $j=1,\ldots,n$, a solution \textcolor{black}{$x^\#\in\doubleR^n$ of (\ref{constSVbasisP}) may not have zero entries. For the sake of example, the minimizer of:
\[
    \text{minimize} \quad (x-2)^2+(y-2)^2 \qquad \text{subject to $(x,y)\in\doubleR^2$, $|x|\leq1$, $|y|\leq1$}
\]
is $x^\#=(1,1)$. Clearly, depending on the size of $\tau_j$'s, problem (\ref{constSVbasisP}), and therefore problem (\ref{SVbasisP}), can lead to compressible solutions.}\\

\textbf{Overview.} In Section \ref{sec:pan}, we establish the notation we use in this work. 
In Section \ref{sec:LASSO} we compute the deterministic \textcolor{black}{relationships between the parameters $\lambda_j$'s and the $\tau_j$'s} in order for problems (\ref{constSVbasisP}) and (\ref{SVbasisP}) to be equivalent, \textcolor{black}{under the following specific assumptions: given $A$ such that $A^TA$ is diagonal, for instance when $A$ is either a subsampling matrix, the Fourier transform matrix or the identity matrix, the Lagrange multipliers are explicitly given by:
\begin{equation}\label{opt1}
		\lambda_j^\#=2\norm{a_{\ast,j}}_2^2\Big(\frac{|\langle b,a_{\ast,j}\rangle|}{\norm{a_{\ast,j}}_2^2}-\tau_j\Big)\chi_{\Big[0,\frac{|\langle b,a_{\ast,j}\rangle|}{\norm{a_{\ast,j}}_2^2}\Big]}(\tau_j),
	\end{equation}
where $a_{\ast,j}$ denotes the $j$-th column of $A$ and $\chi_{\Big[0,\frac{|\langle b,a_{\ast,j}\rangle|}{\norm{a_{\ast,j}}_2^2}\Big]}$ is the characteristic function on $\Big[0,\frac{|\langle b,a_{\ast,j}\rangle|}{\norm{a_{\ast,j}}_2^2}\Big]$, $j=1,...,n$. 
We also provide deterministic results for those cases where there is a control on the sign of the gradient of $\norm{Ax-b}_2^2$, providing the explicit expression of the Lagrange multipliers under the assumption $\frac{\partial}{\partial x_j}(\norm{Ax-b}_2^2)\leq0$ for every $j=1,\ldots,n$ in a properly defined hypercube. The conclusions are reported in Section \ref{sec:conclusions}.}

We point out that our result is interesting for two main reasons: to the best of our knowledge, the analytic dependence that we investigate was never fully understood, neither computed. Formula (\ref{opt1}) can be applied directly for denoising in some transform domain, i.e. if $A$ is orthogonal or the identity itself. Furthermore, if $A$ is the matrix of an undersampling pattern, $A^TA$ is diagonal and (\ref{opt1}) can be exploited to control the \textcolor{black}{Lagrange multipliers} of the \textcolor{black}{weighted} LASSO problem (\ref{SVbasisP}) in terms of voxel-wise estimates. We presume that such estimates are relatively easy to obtain. For example, one may first reconstruct a highly undersampled image, apply filters using convolution techniques and estimate the tuning parameters $\lambda$ from the filtered image, therefore obtaining a denoised image. 

\section{Preliminaries and notation}\label{sec:pan}
\textcolor{black}{\paragraph{Notation.} For the theory of this section, we refer to \cite{BV,FR,R3} as reference therein. We denote by $\doubleR^n$ the $n$-dimensional vector space of real column vectors, whereas $\doubleR^{m\times n}$ denotes the space of real $m\times n$ matrices. To ease the notation, if  $x\in\doubleR^n$, the notation $x=(x_1,...,x_n)$ means that $x$ is the column vector with coordinates $x_1,...,x_n$. If $A\in\doubleR^{m\times n}$, $A^T$ denotes the transpose of $A$.}

\textcolor{black}{If $A\in\doubleR^{m\times n}$, $\ker(A)$ and $\Image(A)$ denote the kernel and the image of $A$, respectively. $\scriptM_n$ denotes the set of $n\times n$ signature matrices and, for $x\in \doubleR^n$, $\sgn(x)$ denotes the set of all the possible signatures of $x$, see Section \ref{subsec:33} below.}

\textcolor{black}{For $1\leq p<\infty$, the  $\ell_p$-norm on $\doubleR^n$ is defined as:
\[
\norm{x}_p:=\left(\sum_{j=1}^n|x_j|^p\right)^{1/p}, \qquad x\in\doubleR^n,
\]
whereas $\norm{x}_\infty:=\max_{j=1,...,n}|x_j|$. We denote by $\langle\cdot,\cdot\rangle$ the canonical inner product of $\doubleR^n$, i.e. 
\[
\langle x,y\rangle = x^Ty=\sum_{j=1}^nx_jy_j, \qquad x,y\in\doubleR^n.
\]
If $x\in\doubleR^n$, $x^+$ is its positive part, i.e. $x^+\in\doubleR^n$ has coordinates $(x^+)_j=\max\{x_j,0\}$  ($j=1,...,n$). If $\Omega\subseteq\doubleR^n$, $\Omega^\perp$ denotes its orthogonal complement. For vectors $x,y\in\doubleR^n$, $x=(x_1,\ldots,x_n)$, $y=(y_1,\ldots,y_n)$, the notation $x\preceq y$ means that $x_j\leq y_j$ for every $j=1,\ldots,n$. Analogously, $x\prec y$ if $x_j<y_j$ for every $j=1,\ldots,n$. The relationships $x\succeq y$ and $x\succ y$ are defined similarly.}

 \textcolor{black}{We always consider $\doubleR^n$ endowed with the Euclidean topology. If $\Omega\subseteq\doubleR^n$, $\mathring{\Omega}$ denotes the interior of $\Omega$ and $\partial \Omega$ denotes the boundary of $\Omega$. If $g$ is a real-valued function defined on an open neighbourhood of $x_0\in\doubleR^n$, $\partial g(x_0)$ denotes the subdifferential of $g$ at $x_0$, see Section \ref{subsec:subdiff} below for the definition of subdifferential. Using the same notation to denote both the boundary of a set and the subdifferential of a function shall not cause confusion. If $\Omega\subseteq\doubleR^n$, $\agInt(\Omega)$ denotes the algebraic interior of $\Omega$, see Definition \ref{defAgint} below. If $g$ is a function and $\Omega$ is a subset of its domain, $g|_\Omega$ denotes the restriction of $g$ to $\Omega$. Finally, if $\Omega\subseteq\doubleR^n$, $\chi_\Omega$ denotes the characteristic function of $\Omega$.
}

\subsection{Lagrange duality} Consider a constrained optimization problem in the form:
\begin{equation}\label{eq:11}
	\text{minimize $F_0(x),$} \qquad \textcolor{black}{\Psi} x=y,\ F_l(x)\leq b_l, \ l=1,\ldots,M,
\end{equation}
where $\Psi\in\doubleR^{m\times n}$, $y\in\doubleR^{\textcolor{black}{m}}$ and $F_0,F_1,\ldots,F_M:\doubleR^n\to(-\infty,+\infty]$ are convex. We always assume that a minimizer of (\ref{eq:11}) exists. \\

A point $x\in\doubleR^n$ is called \textbf{feasible} if it belongs to the constraints, that is if:
\begin{equation}\label{defDiK}
	x\in K:=\Big\{\zeta\in\doubleR^n \ : \ \textcolor{black}{\Psi}\zeta=y \ and \ F_l(\zeta)\leq b_l, \ l=1,\ldots,M\Big\}
\end{equation}
and $K$ is called the \textbf{set of feasible points}. To avoid triviality, we always assume $K\neq\varnothing$, in which case problem (\ref{eq:11}) is called \textbf{feasible}. In view of the definition of $K$, problem (\ref{eq:11}) can be implicitly written as:
\[
	\text{minimize}\quad F_0(x), \qquad \text{$x\in K$}.
\]

\textcolor{black}{Convex problems} such as (\ref{lassettino}) and (\ref{constSVbasisP}) can be approached by considering their Lagrange formulation, see Subsection \ref{subsec:LF} below. The \textbf{Lagrange function} related to (\ref{eq:11}) is the function $L:\doubleR^n\times\doubleR^m\times [0,+\infty)^M\to(-\infty,+\infty]$ defined as:
\[
	L(x,\xi,\lambda):=F_0(x)+\langle\xi,\textcolor{black}{\Psi} x-y\rangle+\sum_{l=1}^M\lambda_l(F_l(x)-b_l).
\]

Observe that for all $\xi,\lambda$ and $x\in K$:
\[
	L(x,\xi,\lambda)=F_0(x)+\underbrace{\langle\xi,\textcolor{black}{\Psi} x-y\rangle}_\text{$=0$}+\sum_{l=1}^M\underbrace{\lambda_l}_\text{$\geq0$}(\underbrace{F_l(x)-b_l}_\text{$\leq0$})\leq F_0(x),
\]
so that:
\begin{equation}\label{eq:12}
	\inf_{x\in \doubleR^n}L(x,\xi,\lambda)\leq \inf_{x\in K}L(x,\xi,\lambda)\leq \inf_{x\in K}F_0(x).
\end{equation}

\begin{defi}
	The function $H:\doubleR^m\times[0,+\infty)^M\to[-\infty,+\infty]$ defined as:
	\[
		H(\xi,\lambda):=\inf_{x\in\doubleR^n}L(x,\xi,\lambda)
	\]
	is called \textbf{Lagrange dual function}.
\end{defi}

Inequalities (\ref{eq:12}) read as:
\begin{equation}\label{weak1}
	H(\xi,\lambda)\leq \inf_{x\in K}F_0(x)
\end{equation}
for all $\xi\in\doubleR^m$ and all $\lambda\in[0,+\infty)^M$. 
Stating (\ref{weak1}) differently, we have the \textbf{weak duality inequality}:
\begin{equation}\label{weak}\tag{W}
	\sup_{\underset{\lambda\succeq0}{\xi\in\doubleR^m}}H(\xi,\lambda)\leq \inf_{x\in K}F_0(x).
\end{equation}
We point out that (\ref{weak}) is equivalent to:
\begin{equation}\label{concaveW}
	\sup_{\xi,\lambda}\inf_xL(x,\xi,\lambda)\leq \inf_x\sup_{\xi,\lambda}L(x,\xi,\lambda)
\end{equation}
(see \cite[Subsection 5.4.1]{BV}). 

We are interested in computing the parameters $(\xi,\lambda)$ such that (\ref{weak}) is an equality, in  which case (\ref{weak}) becomes:
\begin{equation}\label{strong}\tag{S}
	\sup_{\underset{\lambda\succeq0}{\xi\in\doubleR^m}}H(\xi,\lambda)= \inf_{x\in K}F_0(x),
\end{equation}
\textcolor{black}{so that \textbf{strong duality} (\ref{strong}) holds for problem (\ref{eq:11})}.

\subsection{Subdifferential}\label{subsec:subdiff}
\begin{defi}[Subdifferential]
	Let $\Omega\subseteq\doubleR^n$ be open and $g:\Omega\to\doubleR$. Let $x_0\in\Omega$. The \textbf{subdifferential} of $g$ at $x_0$ is the set:
	\[
		\partial g(x_0):=\{v\in\doubleR^n \ : \ g(x)\geq g(x_0)+v^T(x-x_0) \ \forall x\in\Omega\}.
	\]
	We refer to any $v\in\partial g(x_0)$ as a \textbf{subgradient} of $g$ at $x_0$.
\end{defi}
We will use the following proposition.

\begin{prop}
	Let $\Omega\subseteq\doubleR^n$ be open and $g:\Omega\to\doubleR$ be convex and continuous on $\Omega$. Let $x_0\in\Omega$. Then, $\partial g(x_0)\neq\varnothing$.
\end{prop}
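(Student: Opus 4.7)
The plan is to apply a supporting hyperplane argument to the epigraph of $g$. First, I would set $E := \{(x,t) \in \Omega \times \doubleR : t \geq g(x)\}$, the epigraph of $g$, and observe that by convexity of $g$, $E$ is a convex subset of $\doubleR^{n+1}$. Since $\Omega$ is open and $g$ is continuous on $\Omega$, the interior of $E$ is nonempty: any $(x,t)$ with $x \in \Omega$ and $t > g(x)$ admits a small ball contained in $E$ by continuity of $g$ and openness of $\Omega$. Moreover, $(x_0, g(x_0))$ lies on the boundary of $E$, since every neighbourhood of it contains points of the form $(x_0, g(x_0) - \delta) \notin E$.

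Next, I would invoke the supporting hyperplane theorem at the boundary point $(x_0, g(x_0))$ to produce a nonzero $(v, \alpha) \in \doubleR^n \times \doubleR$ such that
\[
v^T x + \alpha t \geq v^T x_0 + \alpha g(x_0) \qquad \text{for all } (x,t) \in E.
\]
Fixing $x \in \Omega$ and letting $t \to +\infty$ forces $\alpha \geq 0$. The crucial step is ruling out $\alpha = 0$: in that case the inequality would reduce to $v^T(x - x_0) \geq 0$ for every $x \in \Omega$. Since $\Omega$ is open around $x_0$, for sufficiently small $\epsi > 0$ the point $x_0 - \epsi v$ belongs to $\Omega$, which would yield $-\epsi \norm{v}_2^2 \geq 0$ and hence $v = 0$, contradicting $(v,\alpha) \neq 0$.

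Therefore $\alpha > 0$, and after rescaling we may take $\alpha = 1$. Choosing $t = g(x)$ for arbitrary $x \in \Omega$, the separation inequality rearranges to
\[
g(x) \geq g(x_0) + (-v)^T (x - x_0),
\]
so that $-v \in \partial g(x_0)$ and the subdifferential is nonempty.

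The step I expect to be the main obstacle is justifying the supporting hyperplane itself: this is the finite-dimensional geometric Hahn–Banach theorem, and it genuinely requires the observation that the epigraph has nonempty interior, which in turn relies on both the openness of $\Omega$ and the continuity of $g$. The rest of the argument is a routine unwinding of the separation inequality, the only subtlety being the exclusion of the degenerate case $\alpha = 0$, which again leverages the openness of $\Omega$.
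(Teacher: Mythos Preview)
Your argument is correct and is the standard supporting-hyperplane proof of this classical fact. Note, however, that the paper does not actually supply a proof of this proposition: it is stated as a known preliminary result and immediately followed by the next subsection. So there is no ``paper's own proof'' to compare against here.

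That said, your approach is entirely in keeping with the paper's toolkit: the separation step you invoke is precisely the geometric Hahn--Banach theorem that the paper records as Theorem~\ref{HBtheorem} and later uses in the proof of Theorem~\ref{thm1}. One could even phrase your argument directly in the paper's language by applying Theorem~\ref{HBtheorem} to the disjoint convex sets $\scriptA = E$ and $\scriptB = \{(x_0,t) : t < g(x_0)\}$ (the latter being open), which yields exactly the separating functional $(v,\alpha)$ you work with. Your handling of the two delicate points --- nonemptiness of the interior of $E$ (needed for separation) and the exclusion of $\alpha=0$ (needed to obtain a genuine subgradient) --- is clean and correctly identifies where the hypotheses of openness of $\Omega$ and continuity of $g$ enter.
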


\subsection{Lagrange formulation of constrained problems}\label{subsec:LF}
\textcolor{black}{Under the notation above,} let $F(x):=(F_1(x),...,F_M(x))$. In the convex framework, if the constraint $F(x)\preceq b$ does not reduce to $F(x)=b$, namely if for all $l=1,\ldots,M$ the inequality $F_l(x)<b_l$ holds for some $x\in \doubleR^n$, then strong duality holds. 

\begin{theorem}[Cf. \cite{BV}, Section 5.3.2]\label{thm1}
Assume that $F_0,F_1,\ldots,F_M$ are convex functions defined on $\doubleR^n$. 
Let $x^\#$ be such that $F_0(x^\#)=\inf_{x\in\doubleR^n}F_0(x)$. If:
\begin{enumerate}[(i)]
	\item there exists $\tilde x\in \doubleR^n$ such that $\Psi\tilde x=y$ and $F(\tilde x)\prec b$ or,
	\item in absence of inequality constraints, if $K\neq\varnothing$ (i.e. if there exists $\tilde x\in\doubleR^n$ such that $\Psi\tilde x=y$), 
\end{enumerate}
then, there exists $(\xi^\#,\lambda^\#)\in\doubleR^m\times[0,+\infty)^M$ such that $H(\xi^\#,\lambda^\#)=\sup_{\xi,\lambda}H(\xi,\lambda)$ and $H(\xi^\#,\lambda^\#)=F_0(x^\#)$.
\end{theorem}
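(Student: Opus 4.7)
The plan is the standard proof of strong duality via a separating hyperplane argument, combined with Slater's condition to exclude degeneracy. Since weak duality \eqref{weak} is already in hand, $H(\xi,\lambda)\leq F_0(x^\#)$ for every admissible pair, and it suffices to exhibit a pair $(\xi^\#,\lambda^\#)\in\doubleR^m\times[0,+\infty)^M$ with $H(\xi^\#,\lambda^\#)\geq p^\#:=F_0(x^\#)$; weak duality will then automatically make $(\xi^\#,\lambda^\#)$ a maximizer of $H$ whose value coincides with $F_0(x^\#)$.

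To produce this pair, I would work in the auxiliary space $\doubleR^M\times\doubleR^m\times\doubleR$ and introduce the convex set
\[
    \scriptA:=\big\{(u,v,t)\ :\ \exists\,x\in\doubleR^n,\ F_l(x)-b_l\leq u_l\ \forall l,\ \Psi x-y=v,\ F_0(x)\leq t\big\},
\]
whose convexity follows from convexity of $F_0,F_1,\ldots,F_M$ and linearity of $\Psi$, together with the convex half-line $\scriptB:=\{(0,0,s):s<p^\#\}$. The sets are disjoint, since any common point would provide a feasible $x$ with $F_0$-value strictly below $p^\#$, contradicting the optimality of $x^\#$. A standard separating hyperplane theorem then produces a nonzero triple $(\lambda,\xi,\mu)\in\doubleR^M\times\doubleR^m\times\doubleR$ and $\alpha\in\doubleR$ with $\langle\lambda,u\rangle+\langle\xi,v\rangle+\mu t\geq \alpha$ on $\scriptA$ and $\mu s\leq\alpha$ on $\scriptB$. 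Since $\scriptA$ is monotone upward in each $u_l$ and in $t$, one forces $\lambda\succeq 0$ and $\mu\geq 0$; letting $s\uparrow p^\#$ yields $\alpha\geq\mu p^\#$; and substituting $(F(x)-b,\Psi x-y,F_0(x))\in\scriptA$ produces, for every $x\in\doubleR^n$,
\[
    \mu F_0(x)+\langle\lambda,F(x)-b\rangle+\langle\xi,\Psi x-y\rangle\geq\mu p^\#.
\]

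The crucial step and, in my view, the main obstacle is establishing $\mu>0$; this is exactly where Slater's condition intervenes. Assuming $\mu=0$ for contradiction and evaluating the displayed inequality at the strictly feasible $\tilde x$ of hypothesis (i) gives $\langle\lambda,F(\tilde x)-b\rangle\geq 0$, which combined with $\lambda\succeq 0$ and $F(\tilde x)-b\prec 0$ componentwise forces $\lambda=0$. The surviving inequality $\langle\xi,\Psi x-y\rangle\geq 0$ for every $x\in\doubleR^n$, combined with $\Psi\tilde x=y$, makes $x-\tilde x$ range over all of $\doubleR^n$ and hence $\Psi^T\xi=0$; a standard orthogonal-decomposition argument — writing $\xi=\xi_1+\xi_2$ with $\xi_1\in\Image(\Psi)$ and $\xi_2\in\ker(\Psi^T)$, and noting that $\xi_2$ contributes nothing to the separating functional on $\scriptA$ since $v=\Psi x-y\in\Image(\Psi)$ — allows us to reduce to $\xi\in\Image(\Psi)\cap\ker(\Psi^T)=\{0\}$, contradicting the nontriviality of $(\lambda,\xi,\mu)$. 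With $\mu>0$ secured, dividing through by $\mu$ and setting $(\xi^\#,\lambda^\#):=(\xi/\mu,\lambda/\mu)$ translates the displayed inequality into $L(x,\xi^\#,\lambda^\#)\geq p^\#$ for every $x$, i.e. $H(\xi^\#,\lambda^\#)\geq p^\#$, which together with \eqref{weak} closes the proof. Case (ii), in absence of inequality constraints, is the evident specialization: the $\lambda$-step is vacuous, only the $\xi$-reduction survives, and the required $\tilde x$ is supplied by $K\neq\varnothing$.
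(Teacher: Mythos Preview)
Your argument is correct and follows essentially the same route as the paper's: build the epigraph-like set $\scriptA$ and the half-line $\scriptB$, separate them, read off $\lambda\succeq 0$, $\mu\geq 0$, and use Slater's point $\tilde x$ to rule out $\mu=0$. The only cosmetic difference is that the paper disposes of the equality-constraint part by reducing at the outset to $\Psi$ of full row-rank (so that $\Psi^T\xi=0$ forces $\xi=0$ directly), whereas you keep $\Psi$ general and instead project $\xi$ onto $\Image(\Psi)$; both devices serve the same purpose of excluding a spurious normal living in $\ker(\Psi^T)$.
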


The proof of Theorem \ref{thm1} contains the fundamental construction we will use in the next sections and we report it for this reason. We refer to \cite[Subsection 5.3.2]{BV} \textcolor{black}{for the complete proof}. First, we need a result from functional analysis, which is well-known as (geometrical) \textit{Hahn-Banach theorem}.

\begin{defi}[Separating hyperplane]
Consider two subsets $\scriptA,\scriptB\subseteq\mathds{R}^n$. A hyperplane $\Gamma:=\{x\in\doubleR^n \ : \ \langle \xi,x\rangle=\alpha\}$ satisfying:
\begin{equation}\label{sepHyp}
	\langle\xi,x\rangle\leq\alpha \textcolor{black}{<}\langle\xi,y\rangle, \qquad x\in \mathcal{A}, \quad y\in\scriptB,
\end{equation}
is a \textbf{separating hyperplane} between $\scriptA$ and $\scriptB$.
\end{defi}

\begin{theorem}[Cf. \cite{R3} Theorem 3.4]\label{HBtheorem}
Let $\mathcal{A},\mathcal{B}\subset\doubleR^n$ be two convex and disjoint subsets. If $\scriptB$ is open, there exists $\xi\in\doubleR^n$ and $\alpha\in\doubleR$ such that (\ref{sepHyp}) holds for all $x\in\mathcal{A}$ and all $y\in\mathcal{B}$. 
\end{theorem}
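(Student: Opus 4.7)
The strategy is to reduce the problem of separating two convex sets to the problem of separating the origin from a single open convex set. First I would form the Minkowski difference $C:=\scriptB-\scriptA=\{y-x:y\in\scriptB,\,x\in\scriptA\}$. This set is convex (from the convexity of $\scriptA$ and $\scriptB$), open (since $C=\bigcup_{x\in\scriptA}(\scriptB-x)$ is a union of translates of the open set $\scriptB$), and does not contain $0$ (since $\scriptA\cap\scriptB=\varnothing$). Thus the task reduces to producing $\xi\in\doubleR^n\setminus\{0\}$ with $\langle\xi,z\rangle>0$ for every $z\in C$.

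Pick a point $c_0\in C$ and translate by setting $D:=C-c_0$, so that $0\in D$, $D$ is open and convex, and $-c_0\notin D$. Associated to $D$ is its Minkowski gauge $\mu_D(x):=\inf\{t>0:x\in tD\}$, which I would verify is well-defined, positively homogeneous, subadditive, and satisfies $\{x:\mu_D(x)<1\}=D$ thanks to the openness of $D$ around the origin. Since $-c_0\notin D$, it follows that $\mu_D(-c_0)\geq 1$.

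Next, on the one-dimensional subspace $\doubleR\cdot(-c_0)$ I would define the linear map $f(t(-c_0)):=t$ and check $f\leq\mu_D$ on this subspace (trivially for $t\leq 0$, and via the bound $\mu_D(-c_0)\geq 1$ combined with positive homogeneity for $t>0$). The analytic Hahn-Banach theorem (which in $\doubleR^n$ amounts to $n-1$ iterated one-dimensional extensions preserving the sublinear majorant) then extends $f$ to a linear functional $\Lambda:\doubleR^n\to\doubleR$ with $\Lambda\leq\mu_D$ everywhere. Since $\Lambda(-c_0)=1\neq 0$, the functional $\Lambda$ is nonzero; and for every $z\in C$ one has $z-c_0\in D$, hence $\Lambda(z)-\Lambda(c_0)=\Lambda(z-c_0)<1=-\Lambda(c_0)$, i.e.\ $\Lambda(z)<0$. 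Setting $\xi:=-\Lambda$ (interpreted as a vector via the standard inner product), we obtain $\langle\xi,y\rangle>\langle\xi,x\rangle$ for all $x\in\scriptA$, $y\in\scriptB$. Taking $\alpha:=\sup_{x\in\scriptA}\langle\xi,x\rangle\in\doubleR$ yields $\langle\xi,x\rangle\leq\alpha\leq\langle\xi,y\rangle$; the strict inequality $\alpha<\langle\xi,y\rangle$ on $\scriptB$ follows because openness of $\scriptB$ and continuity of $\langle\xi,\cdot\rangle$ force $\langle\xi,y\rangle$ to lie strictly in the interior of $\langle\xi,\scriptB\rangle\subseteq\doubleR$.

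The principal obstacle is the Minkowski-functional/Hahn-Banach extension step: verifying that $\mu_D$ is subadditive and that its strict sublevel set equals $D$ requires the open, convex, and $0$-containing properties of $D$ simultaneously, and the extension of $f$ from a single line to all of $\doubleR^n$ is the core non-trivial input driving the whole argument. Everything else reduces to set-theoretic bookkeeping about the difference set $C$ and a routine openness argument to upgrade $\leq$ to $<$ on $\scriptB$.
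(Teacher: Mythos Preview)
The paper does not prove this theorem at all: it is quoted as a black box from \cite{R3}, Theorem 3.4, and used immediately afterwards in the proof of Theorem \ref{thm1}. Your argument is correct and is essentially the classical proof found in Rudin --- form the Minkowski difference $C=\scriptB-\scriptA$, translate so that $0\in D=C-c_0$, use the Minkowski gauge of $D$ as a sublinear majorant, extend a linear functional off the line $\doubleR(-c_0)$ via the analytic Hahn--Banach theorem, and then upgrade to a strict inequality on $\scriptB$ by openness. So there is nothing to compare: you have supplied precisely the proof that the paper defers to its reference.

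One minor point of presentation: when you conclude $\Lambda(z-c_0)<1$, it is worth making explicit that this uses both $\Lambda\leq\mu_D$ and the identification $D=\{\mu_D<1\}$ (the latter needing openness of $D$), since that is exactly the place where the hypothesis ``$\scriptB$ open'' enters. Similarly, the finiteness of $\alpha=\sup_{x\in\scriptA}\langle\xi,x\rangle$ deserves one line (bound it by $\langle\xi,y_0\rangle$ for any fixed $y_0\in\scriptB$). These are routine, and your sketch already signals awareness of them.
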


\begin{proof}[Idea of the proof of Theorem \ref{thm1}]
	First, one assumes that $A$ has full row-rank. Moreover, one reduces to consider the situation in which $p^\ast:=\inf_{x\in K}F_0(x)>-\infty$, otherwise the assertion is trivial.
	
	Consider the set:
	\begin{equation}\label{defdiG}
	\mathcal{G}:=\Big\{\left(F(x)-b,\Psi x-y,F_0(x)\right)\in\doubleR^M\times\doubleR^m\times\doubleR \ : \ x\in\doubleR^n\Big\},
\end{equation}
where, with an abuse of notation, $\Psi x-y$ denotes the row vector with the same (ordered) entries of $\Psi x-y$, and $\scriptA$ be defined as the epigraph:
	\begin{equation}\label{eqA1}\begin{split}
	\scriptA&:=\scriptG+((\doubleR_{\geq0})^M\times\doubleR^m\times\doubleR_{\geq0})=\\
	&=\Big\{(u,v,t)\in \doubleR^M\times\doubleR^m\times\doubleR \ : \ u\succeq F(x)-b,  \\
	& \ \ \ \ \ \  v= \Psi x-y, \ t\geq F_0(x) \ for \ some \ x\in\doubleR^n\Big\}.
\end{split}
\end{equation}

It is easy to verify that if $F_0,F_1,\ldots,F_M$ are convex, than $\scriptA$ is convex. Then, consider the set:
	\[
		\scriptB:=\Big\{(0,0,s)\in\doubleR^M\times\doubleR^m\times\doubleR \ : \ s<p^\ast\Big\}.
	\]
	$\scriptA$ and $\scriptB$ are clearly disjoint, $\scriptB$ (which is an open half-line) being trivially convex and open. Therefore, the assumptions of Theorem \ref{HBtheorem} are satisfied and we conclude that there exists a triple of parameters $(\tilde\lambda,\tilde\xi,\mu)\neq0$ and $\alpha\in\doubleR$ such that:
	\begin{align}
	\label{eq91}
	&(u,v,t)\in\scriptA \ \ \Longrightarrow \ \ \tilde\lambda^Tu+\langle\tilde\xi,v\rangle+\mu t\geq\alpha,\\
	\label{eq92}
	& (u,v,t)\in\scriptB \ \ \Longrightarrow \ \ \tilde\lambda^Tu+\langle\tilde\xi,v\rangle+\mu t\leq\alpha.
	\end{align}
	It is easy to see that the definition of $\scriptA$, together with (\ref{eq91}), imply that $\tilde\lambda_l\geq0$ for all $l=1,\ldots,M$ and $\mu\geq0$. Also, applying the definition of $\scriptB$ to (\ref{eq92}), one finds that $\mu t\leq\alpha$ for all $t<p^\ast$, which implies that $\mu p^\ast\leq\alpha$. Therefore, for all $x\in\doubleR^n$,
	\begin{equation}\label{eq111}
		\sum_{l=1}^M\tilde\lambda_l(F_l(x)-b_l)+\langle\tilde\xi,\Psi x-y\rangle+\mu F_0(x)\geq\alpha\geq \mu p^\ast.
	\end{equation}
	If $\mu>0$, then (\ref{eq111}) gives that $L(x,\tilde\xi/\mu,\tilde\lambda/\mu)\geq p^\ast$ for all $x\in\doubleR^n$, which implies that $H(\tilde\xi/\mu,\tilde\lambda/\mu)\geq p^\ast$. Since the other inequality  \textcolor{black}{holds trivially by the} weak duality inequality, we conclude that $H(\tilde\xi/\mu,\tilde\lambda/\mu)=p^\ast$.
	\textcolor{black}{Finally, using the assumptions on the rank of $\Psi$ and on the existence of a point satisfying the strict inequality constraint, one proves by contradiction that it must be $\mu>0$}.
\end{proof}

\textcolor{black}{\begin{defi}[Lagrange Multipliers]\label{defLOP}
	We refer to a couple $(\xi^\#,\lambda^\#)\in\doubleR^m\times[0,+\infty)^M$ as to \textbf{Lagrange multipliers} for the problem (\ref{eq:11}) if $(\xi^\#,\lambda^\#)$ attend the supremum in (\ref{strong}).
\end{defi}}

As a consequence of Theorem \ref{thm1}, we have the following result, which relates the minimizers of (\ref{eq:11}) and \textcolor{black}{those} of the dual problem $\max_{\xi,\lambda}H(\xi,\lambda)$, providing also the \textcolor{black}{Lagrange multipliers}, that may not be unique.

\begin{cor}[Cf. \cite{FR} Theorem B.28]\label{cor1}
	Let $F_0:\doubleR^n\to[0,+\infty)$ and $\phi:[0,+\infty)\to\doubleR$ be such that $\phi$ is monotonically increasing and $\phi\circ F_0$ is convex. Let $\tau_j>0$ ($j=1,\ldots,M$) and $\psi_j:\doubleR^n\to\doubleR$ ($j=1,\ldots,M$) be convex functions such that $\psi_j^{-1}([0,\tau_j))\neq\varnothing$ for all $j=1,\ldots,M$. Let $x^\#$ \textcolor{black}{be} a minimizer of the problem:
	\begin{equation}\label{prob12}
		\text{minimize}\quad F_0(x), \qquad x\in\doubleR^n \ \psi(x)\preceq\tau,
	\end{equation}
	\textcolor{black}{where} $\tau=(\tau_1,\ldots,\tau_M)$. Then, there exist $\lambda_j\geq0$ ($j=1,\ldots,M$) such that $x^\#$ is a minimizer of:
	\begin{equation}\label{prob13}
		\text{minimize}\quad \phi(F_0(x))+\sum_{j=1}^M\lambda_j\psi_j(x).
	\end{equation} 
\end{cor}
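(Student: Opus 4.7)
The plan is to reduce this to Theorem \ref{thm1} by passing through the convex objective $\phi\circ F_0$. Since $\phi$ is monotonically increasing and $x^\#$ minimizes $F_0$ over the feasible set $K=\{x\in\doubleR^n:\psi_j(x)\leq\tau_j,\ j=1,\ldots,M\}$, monotonicity yields $\phi(F_0(x^\#))\leq \phi(F_0(x))$ for every $x\in K$, so $x^\#$ is also a minimizer of $\phi\circ F_0$ over $K$. This transformed problem has convex objective (by hypothesis) and convex inequality constraints $\psi_j(x)-\tau_j\leq 0$, so it fits directly into the framework of Theorem \ref{thm1}. The Slater-type qualification is supplied, under the standing interpretation, by the hypothesis $\psi_j^{-1}([0,\tau_j))\neq\varnothing$ for all $j$, which provides a point $\tilde x$ with $\psi_j(\tilde x)<\tau_j$ simultaneously for all $j$, and there are no equality constraints to worry about.

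Applying Theorem \ref{thm1} with $\phi\circ F_0$ as the objective, I obtain Lagrange multipliers $\lambda^\#=(\lambda_1^\#,\ldots,\lambda_M^\#)\in[0,+\infty)^M$ for which strong duality holds, i.e.,
\[
\inf_{x\in\doubleR^n}\Big\{\phi(F_0(x))+\sum_{j=1}^M\lambda_j^\#(\psi_j(x)-\tau_j)\Big\}=\phi(F_0(x^\#)).
\]
The final step is standard complementary slackness. Evaluating the Lagrangian at $x=x^\#$, each term $\lambda_j^\#(\psi_j(x^\#)-\tau_j)$ is non-positive because $\lambda_j^\#\geq 0$ and $\psi_j(x^\#)\leq\tau_j$; hence
\[
\phi(F_0(x^\#))+\sum_{j=1}^M\lambda_j^\#(\psi_j(x^\#)-\tau_j)\leq \phi(F_0(x^\#)),
\]
while the dual equality forces the reverse inequality. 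Therefore $x^\#$ itself attains the infimum of the Lagrangian over all of $\doubleR^n$. Since the additive constant $-\sum_j\lambda_j^\#\tau_j$ does not affect the set of minimizers, $x^\#$ minimizes $\phi(F_0(x))+\sum_{j=1}^M\lambda_j^\#\psi_j(x)$, which is precisely (\ref{prob13}).

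The main obstacle is the constraint qualification: Theorem \ref{thm1} requires a single $\tilde x$ witnessing \emph{all} the strict inequalities $\psi_j(\tilde x)<\tau_j$ simultaneously, whereas the stated hypothesis only asserts each preimage is individually nonempty. I would therefore read the hypothesis as (or, if needed, strengthen it to) $\bigcap_{j=1}^M\psi_j^{-1}([0,\tau_j))\neq\varnothing$; with that in hand, Slater applies and the rest of the argument is purely bookkeeping with the dual function $H$ and complementary slackness. No convexity of $\phi$ by itself is used, only monotonicity and the convexity of the composition $\phi\circ F_0$, which is exactly what the hypothesis supplies.
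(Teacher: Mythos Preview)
Your proof is correct and follows essentially the same route as the paper: pass to the convex objective $\phi\circ F_0$ via monotonicity, invoke Theorem~\ref{thm1} to obtain $\lambda^\#$ with $H(\lambda^\#)=\phi(F_0(x^\#))$, and conclude that $x^\#$ minimizes the Lagrangian (hence the penalized objective after dropping the constant $-\sum_j\lambda_j^\#\tau_j$). Your complementary-slackness justification of the last step is in fact more transparent than the paper's terse appeal to (\ref{concaveW}), and your caveat about the Slater condition---that one needs a \emph{single} $\tilde x$ with $\psi_j(\tilde x)<\tau_j$ for all $j$, not just one witness per $j$---is a genuine gap in the stated hypothesis that the paper's own proof also glosses over.
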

\begin{proof}
Since $\phi$ is monotonically increasing, (\ref{prob12}) is obviously equivalent to:
	\begin{equation}\label{prob14}
		\text{minimize}\quad\phi(F_0(x)), \qquad x\in\doubleR^n \ \psi_j(x)\leq\tau_j,
	\end{equation}
	($j=1,\ldots,M$) whose Lagrangian is given by:
	\begin{equation}\label{probmin}
		L(x,\lambda)=\phi(F_0(x))+\sum_{j=1}^M\lambda_j(\psi_j(x)-\tau_j).
	\end{equation}
	By the assumption, $\phi\circ F_0$ and each $\psi_j$ are convex and the inequalities $\psi_j(\tilde x)<\tau_j$ are satisfied by some $\tilde x\in\doubleR^n$ (observe that here we need $\tau_j>0$), so we can apply Theorem \ref{thm1} to get $H(\lambda^\#)=\phi(F_0(x^\#))$ for some $\lambda^\#\in[0,+\infty)^M$. By (\ref{concaveW}), for all $x\in\doubleR^n$:
	\[
		L(x^\#,\lambda^\#)\leq L(x,\lambda^\#),
	\]
	so that $x^\#$ is also a minimizer of the function $x\in\doubleR^n\mapsto L(x,\lambda^\#)$. Since the constant terms $-\lambda_j\tau_j$ in (\ref{probmin}) do not affect the \textcolor{black}{set of} minimizers, we have that $x^\#$ is a minimizer of:
	\[
		\text{minimize} \quad \phi(F_0(x))+\sum_{j=1}^M\lambda_j^\#(\psi_j(x)-\tau), \qquad x\in\doubleR^n.
	\]
\end{proof}

\begin{oss}
	Theorem \ref{HBtheorem} has a complex version that holds with $\Re\langle z,w\rangle=\Re\left(\sum_{j=1}^n\overline{z_j}w_j\right)$ ($\Re$ denotes the real part of a complex number) instead of $\langle \cdot,\cdot\rangle$. In particular, the entire theory presented in this work is applicable in the complex framework as well. This extension involves replacing the canonical real inner product of $\doubleR^n$ with the real inner product on $\mathds{C}^n$ defined above. Therefore, we do not need to study the complex case separately, as only the structure of $\mathds{C}^n$ as a real vector space is involved.
\end{oss}

\begin{oss}
	To sum up, Theorem \ref{thm1} and Corollary \ref{cor1} together tell that, up to the sign, the coefficients of any hyperplane \textcolor{black}{separating} the two sets:
	\[
		\scriptA=\Big\{(u,t)\in\doubleR^{M+1} \ : \ u\succeq F(x)-b, \ t\geq F_0(x) \ for \ x\in\doubleR^n\Big\}
	\]
	and 
	\[
		\scriptB=\Big\{(0,t)\in\doubleR^{M+1} \ : \ t<\inf_{x\in K}F_0(x)\Big\}
	\]
	define Lagrange multipliers for problem (\ref{eq:11}), in absence of equality constraints, i.e. if $y=0$ and $\Psi=0$ in (\ref{eq:11}). This is the geometric idea \textcolor{black}{that} we will apply in the following sections to the \textcolor{black}{weighted} LASSO.
\end{oss}

\section{The \textcolor{black}{weighted} LASSO}\label{sec:LASSO}
Let $A\in\doubleR^{m\times n}$, $b\in\doubleR^m$ and $\tau_1,\ldots,\tau_n\geq0$. We denote with $a_{\ast,j}$ the $j$-th column of $A$ and set $b=(b_1,\ldots,b_m)$. We consider the constrained minimization problem:
\begin{equation}\label{eq:01}
	\text{minimize} \quad \norm{Ax-b}_2^2, \qquad x\in\doubleR^n, \ |x_j|\leq\tau_j, \  {j=1,\ldots,n}.
\end{equation}
We also assume that $\tau_j\neq0$ for all $j=1,\ldots,n$. In fact, if $\tau_j=0$ for some $j=1,\ldots,n$, then the solution $x=(x_1,\ldots,x_n)$ has $x_j=0$. In this case, problem (\ref{eq:01}) reduces to
\begin{equation}\label{equivalenceOfTau0}
		\text{minimize} \quad \norm{\tilde Ay- b}_2^2,\qquad  y\in\doubleR^{n-r}, \ |y_{i_j}|\leq\tau_{i_j}, \ {j=1,\ldots,n-r},
\end{equation}
where $r=\text{card}\{j : \tau_j=0\}\leq m$, $J=\{1\leq i_1<\ldots<i_{n-r}\leq n\}:=\{j : \tau_j\neq 0\}$ and $\tilde A=(a_{\ast,j})_{j\in J}\in\mathds{R}^{m\times (n-r)}$.\\

Let $K$ denote the set of the feasible points of problem (\ref{eq:01}), that is:
\begin{equation}\label{K}
	K=\{x\in\doubleR^n \ : \ |x_j|\leq\tau_j \ \forall j=1,\ldots,n\}
\end{equation}
and consider the Lagrange function \textcolor{black}{associated to (\ref{eq:01})}, i.e.
\begin{equation}
	\label{eq22}
	L(x,\lambda_1,\ldots,\lambda_n)=\norm{Ax-b}_2^2+\sum_{j=1}^n\lambda_j(|x_j|-\tau_j).
\end{equation}
We are interested in a vector of Lagrange multipliers $\lambda^\#\succeq0$ for (\ref{eq:01}). Based on the proofs of Theorem \ref{thm1} and Corollary \ref{cor1}, $\lambda^\#$ can be chosen as the direction of \textcolor{black}{any} hyperplane \textcolor{black}{separating the sets}:
\begin{equation}\label{defDiA1}
\begin{split}
\scriptA &= \Big\{(u,t)\in \doubleR^n\times\doubleR \ : \ u_l\geq |x_l|-\tau_l \ (l=1,\ldots,n), \\
        & \qquad \quad t\geq \norm{Ax-b}_2^2 \text{ for some } x\in\doubleR^n\Big\}
\end{split}
\end{equation}

and
\begin{equation}\label{defDiB1}
\scriptB = \Big\{(0,t)\in\mathds{R}^{n}\times\mathds{R} \ : \ t<p^\ast\Big\}
\end{equation}

where $p^\ast:=\inf_{x\in K}\norm{Ax-b}_2^2$. 

\subsection{The scalar case}
\label{subsec:31}
To clarify the general procedure, we focus on the simple case $m=n=1$ first, in which  (\ref{eq:01}) becomes:
\begin{equation}\label{scalare}
	\text{minimize} \quad (Ax-b)^2, \qquad x\in\doubleR, \ |x|\leq\tau,
\end{equation}
where $A\in\doubleR\setminus\{0\}$ and $b\in\doubleR$. To find the Lagrange multipliers, we consider the set $\scriptG$ of points $(u,t)\in\doubleR^2$ that satisfy:
\[
	\begin{cases}
		u= |x|-\tau,\\
		t= (Ax-b)^2,
	\end{cases}
\]
which give a curve of the half-plane $U=\{(u,t)\in\doubleR^2 \ : \ u\geq\textcolor{black}{-}\tau, \ t\geq0\}$ parametrized by $x\in\doubleR$. More precisely:
\begin{itemize}
	\item if $x\geq0$, 
	\[
		\begin{cases}
		x=u+\tau,\\
		t=\big(A(u+\tau)-b\big)^2=(Au+(A\tau-b))^2,
	\end{cases}
	\]
	which is a branch of parabola in $U$ with vertex in $(\frac{b}{A}-\tau,0)$.
	\item If $x<0$
		\[
		\begin{cases}
		x=-u-\tau,\\
		t=\big(-A(u+\tau)-b\big)^2=(Au+(A\tau+b))^2,
	\end{cases}
	\]
	which is, again, a branch of parabola in $U$, having its vertex in $(-\frac{b}{A}-\tau,0)$.
\end{itemize}

\begin{prop}\label{propCaso1}
	Let $\tau>0$, $A\in\doubleR\setminus\{0\}$, $b\in\doubleR$. A Lagrange multiplier for (\ref{scalare}) is given by:
	\[
		\lambda^\#=\begin{cases}
			2A^2(|b/A|-\tau) & \text{if $0<\tau<|b/A|$},\\
			0 & \text{if $\tau\geq|b/A|$}
		\end{cases}=2A^2(|b/A|-\tau)^+.
	\]
	\textcolor{black}{Namely, if $x^\#$ is a minimizer of (\ref{scalare}), then it is also a minimizer for the problem:
	\[
		\text{minimize} \quad (Ax-b)^2+\lambda^\#|x|, \qquad x\in\doubleR.
	\]}
\end{prop}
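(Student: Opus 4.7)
The proof naturally splits into two cases based on whether the unconstrained minimizer of $(Ax-b)^2$, namely $x = b/A$, lies in the feasible interval $[-\tau, \tau]$. In the easy case $\tau \geq |b/A|$, the point $x^\# := b/A$ is feasible, achieves $(Ax^\# - b)^2 = 0 = p^\ast$, and trivially minimizes $(Ax-b)^2 + 0 \cdot |x|$. Geometrically, the horizontal line $t = 0$ serves as a separating line for $\mathcal{A}$ and $\mathcal{B}$ (the latter being empty since $p^\ast = 0$), which corresponds to the choice $\lambda^\# = 0$ as claimed.

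In the non-trivial case $0 < \tau < |b/A|$, I would first pin down $p^\ast$ and $x^\#$. By the symmetry $b \mapsto -b$, $x \mapsto -x$, I may assume $b/A > 0$, so $|b/A| = b/A$; the opposite sign case is handled by swapping the roles of the two parabola branches, and the two cases merge into the absolute value appearing in the stated formula. Under $b/A > 0$, the positive-branch vertex $u = b/A - \tau$ lies strictly to the right of $u = 0$ while the negative-branch vertex $u = -b/A - \tau$ lies strictly to the left of $u = -\tau$. A case-by-case comparison of $t$ on $\mathcal{G} \cap \{u \leq 0\}$ along each branch then identifies $p^\ast = A^2(b/A - \tau)^2$, attained at $(u, t) = (0, p^\ast)$, i.e., at $x^\# = \tau$.

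The heart of the argument is to exhibit a line separating $\mathcal{A}$ from $\mathcal{B}$ through $(0, p^\ast)$; its normal direction then gives the Lagrange multiplier via the construction in the proof of Theorem \ref{thm1}. I would propose the tangent line to the positive parabola branch at $(0, p^\ast)$: differentiating $t(u) = (Au + A\tau - b)^2$ at $u = 0$ gives slope $-2A^2(b/A - \tau)$, so the candidate separating line is $2A^2(b/A - \tau)\, u + t = p^\ast$. Convexity of the positive branch automatically places it above this tangent. The main technical obstacle I anticipate is verifying that the same line also lies below the negative branch $t = (Au + A\tau + b)^2$ on its domain $u \geq -\tau$; this reduces to checking that a quadratic in $u$ is nonnegative on $[-\tau, +\infty)$, which follows from a short computation showing both of its roots lie below $-\tau$, using the hypothesis $b/A > \tau > 0$.

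Reading off the coefficients of the separating line (with $\mu = 1$) yields $\lambda^\# = 2A^2(b/A - \tau) = 2A^2(|b/A| - \tau)$, and combining with the trivial case $\tau \geq |b/A|$ gives the unified formula $\lambda^\# = 2A^2(|b/A| - \tau)^+$. A cleaner purely analytic alternative, useful as a sanity check on the geometric computation, is to verify directly that $0 \in \partial g(x^\#)$ for $g(x) := (Ax-b)^2 + \lambda^\# |x|$, which by convexity of $g$ immediately implies that $x^\#$ minimizes $g$; the subdifferential check is a two-line computation in each of the cases $b/A > 0$ and $b/A < 0$.
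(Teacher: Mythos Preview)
Your proposal is correct and follows precisely the geometric separating-hyperplane approach that the paper sets up: the paper does not spell out a formal proof of this proposition, but immediately before the statement it parametrizes $\mathcal{G}$ as the union of the two parabola branches with vertices at $(\pm b/A - \tau, 0)$, and your argument (identify $p^\ast$, take the tangent to the relevant branch at $(0,p^\ast)$, verify separation, read off $\lambda^\#$) is exactly the computation that this setup invites. Your symmetry reduction to $b/A>0$ and the subdifferential sanity check are reasonable additions, but the core route is the same as the paper's.
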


\subsection{Properties of $\scriptA$}\label{subsec:33}

Consider $A\in\doubleR^{m\times n}$ and $b=(b_1,...,b_m)\in\doubleR^m$, with:
\[
	A=\begin{pmatrix}
		a_{11} & \ldots & a_{1n}\\
		\vdots & \ddots & \vdots\\
		a_{m1} & \ldots & a_{mn}
	\end{pmatrix}.
\]

We consider the problem (\ref{eq:01}) and the associated Lagrange function:
\begin{equation}\label{lagrangiangeneral}
	L(x,\lambda):=\norm{Ax-b}_2^2+\sum_{j=1}^n\lambda_j(|x_j|-\tau_j).
\end{equation}
Recall that $p^\ast$ was defined as $p^\ast:=\min_{x\in K}\norm{Ax-b}_2^2$, being $K$ the set of the points $x\in \textcolor{black}{\mathds{R}^n}$ such that $|x_j|\leq\tau_j$ for all $j=1,\ldots,n$. It is not difficult to verify that:
\begin{equation}\label{pstar}
	p^\ast=\inf\Big\{ t\in\doubleR \ : \ (u,t)\in\scriptG, \ u_j\leq0 \ \ \forall j=1,\ldots,n\Big\}.
\end{equation}

Let $\mathcal{M}_n$ be the set of the $n$-dimensional signature matrices, that are the diagonal matrices $S=(s_{ij})_{i,j=1}^n\in\doubleR^{n\times n}$ such that \textcolor{black}{$|s_{jj}|=1$} for all $j=1,\ldots,n$.
Observe that if $S\in\mathcal{M}_n$, then $S^2=I_{n\times n}$, where $I_{n\times n}$ denotes the identity matrix in $\doubleR^{n\times n}$, in particular $S$ is invertible with $S^{-1}=S$. If $x\in\doubleR^n$ and $S\in\mathcal{M}_n$ is such that $Sx\in\prod_{j=1}^n[0,+\infty)$, we write $S\in \sgn(x)$.

\begin{lemma}\label{studioKernel} Let $A\in\mathds{R}^{m\times n}$, $b\in\mathds{R}^m$ and $\tau_j>0$ for $j=1,\ldots,n$.
	Let $S\in\mathcal{M}_n$. There exists $u\in\prod_{j=1}^n[-\tau_j,0]$ such that $ASu+AS\tau-b=0$ if and only if $S\in \sgn(x)$ for some $x\in\doubleR^n$ such that $Ax=b$ and $|x_j|\leq\tau_j$. 
\end{lemma}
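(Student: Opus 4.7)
The plan is to exhibit an explicit bijective correspondence between points $u\in\prod_{j=1}^n[-\tau_j,0]$ satisfying $ASu+AS\tau-b=0$ and vectors $x\in\doubleR^n$ satisfying $Ax=b$, $|x_j|\leq\tau_j$, and $S\in\sgn(x)$. The natural candidate, suggested by rewriting $ASu+AS\tau-b=AS(u+\tau)-b$, is the substitution
\[
x := S(u+\tau), \qquad \text{equivalently} \qquad u := Sx-\tau,
\]
which makes sense because $S\in\scriptM_n$ is an involution ($S^2=I_{n\times n}$).

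For the forward implication, I would assume $u\in\prod_j[-\tau_j,0]$ with $AS(u+\tau)=b$ and set $x=S(u+\tau)$. Then $Ax=AS(u+\tau)=b$ by hypothesis. Since $u_j+\tau_j\in[0,\tau_j]$ and $S$ is diagonal with entries $\pm 1$, one gets $|x_j|=|s_{jj}(u_j+\tau_j)|=u_j+\tau_j\in[0,\tau_j]$, so the box constraints hold. Moreover $Sx=S^2(u+\tau)=u+\tau\succeq 0$, which by definition means $S\in\sgn(x)$.

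For the converse, I would start from $x$ with $Ax=b$, $|x_j|\leq\tau_j$, and $Sx\succeq 0$, and set $u:=Sx-\tau$. Since $s_{jj}x_j\geq 0$ for each $j$, one has $s_{jj}x_j=|x_j|$, so $u_j=|x_j|-\tau_j$, which lies in $[-\tau_j,0]$ thanks to $0\leq|x_j|\leq\tau_j$. Finally $AS(u+\tau)=AS\cdot Sx=Ax=b$, i.e. $ASu+AS\tau-b=0$.

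There is no real obstacle here: the argument is essentially the observation that $x\mapsto Sx-\tau$ maps the feasible set $\{x:|x_j|\leq\tau_j,\ Sx\succeq 0\}$ bijectively onto the box $\prod_j[-\tau_j,0]$, intertwining the linear constraint $Ax=b$ with $ASu+AS\tau-b=0$. The only mild care needed is at $x_j=0$, where $s_{jj}$ could in principle be either sign, but in all cases $s_{jj}x_j=|x_j|$, so both directions go through without ambiguity.
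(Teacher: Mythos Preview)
Your proposal is correct and follows essentially the same route as the paper: both directions use the involution $S^2=I_{n\times n}$ together with the substitution $x=S(u+\tau)$ (equivalently $u=Sx-\tau$) to pass between the two conditions, checking the box constraints and the sign condition exactly as you do. Your write-up is in fact slightly more careful than the paper's in handling the converse direction (you make explicit that $s_{jj}x_j=|x_j|$ when $Sx\succeq 0$, which is what forces $u_j\in[-\tau_j,0]$).
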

\begin{proof}
	Assume that there exists $u\in\prod_{j=1}^n[-\tau_j,0]$ such that $ASu+AS\tau-b=0$ and let $x:=S(u+\tau)$. Then, $Sx=u+\tau\in\prod_{j=1}^n[0,\tau_j]$, so that $S\in \sgn(x)$, $|x_j|\leq\tau_j$ for all $j=1,\ldots,n$ and
	\[
		0=AS(u+\tau)-b=Ax-b.
	\]
	Vice versa, assume that $Ax=b$ for some $x\in\prod_{j=1}^n[0,\tau_j]$. Let $S\in \sgn(x)$ and $u:=Sx-\tau$. Then, $u\in\prod_{j=1}^n[-\tau_j,0]$ and
	\[
		0=Ax-b=A(Su+\tau)-b=ASu+AS\tau-b.
	\]
\end{proof}

Recall the definitions of the two sets $\scriptA$ and $\scriptB$ \textcolor{black}{given in (\ref{defDiA1}) and (\ref{defDiB1}) respectively}. First, \textcolor{black}{if} $\scriptG$ is the set of the points $(u,t)\in\doubleR^{n+1}$ such that:
\begin{equation}\label{eqdefG}
	\begin{cases}
		u_j= |x_j|-\tau_j & \text{$j=1,\ldots,n$},\\
		t=\norm{Ax-b}_2^2,
	\end{cases}
\end{equation}
for some $x\in\doubleR^n$, \textcolor{black}{then} 
\[
	\scriptA=\scriptG+[0,+\infty)^{n+1},
\]
that is, $(u,t)\in\scriptA$ if and only if 
\begin{equation}\label{eqdefA}
	\begin{cases}
		u_j\geq |x_j|-\tau_j & \text{$j=1,\ldots,n$},\\
		t\geq\norm{Ax-b}_2^2,
	\end{cases}
\end{equation}
for some $x\in\doubleR^n$. Finally, $(u,t)\in\scriptB$ if and only if $t<p^\ast=\min_{x_j\leq\tau_j}\norm{Ax-b}_2^2$. \\

We will prove that the equations (\ref{eqdefG}) defining $\mathcal{G}$ can be written in terms of $\mathcal{M}_n$. 

\begin{lemma}\label{esplG}
Let $\tau_1,\ldots,\tau_n>0$ and \textcolor{black}{let $\scriptG$ be the set of points satisfying (\ref{eqdefG})}.
Then, 
\begin{enumerate}[(i)]
\item $\scriptG$ is closed.
\item $(u,p^\ast)\in\scriptG$ for some $u\in\doubleR^n$ such that $-\tau_j\leq u_j\leq 0$ for all $j=1,\ldots,n$. Moreover, $p^\ast=\min\Big\{ t\in\doubleR \ : \ (u,t)\in\scriptG, \ u_j\leq0 \ \ \forall j=1,\ldots,n\Big\}$.
\item For every $(u,t)\in\scriptG$ there exists $S\in\scriptM_n$ such that $t=\norm{ASu+(AS\tau-b)}_2^2$. Viceversa, if $t=\norm{ASu+(AS\tau-b)}_2^2$ for some $u\in\doubleR^n$ such that $u_j\geq -\tau_j$ and some $S\in\mathcal{M}_n$, then $(u,t)\in\scriptG$.
\end{enumerate}
\end{lemma}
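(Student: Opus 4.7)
The plan is to handle the three items in order, observing that the set $\scriptG$ is really the image of $\doubleR^n$ under the continuous map
\[
\Phi:x\in\doubleR^n \longmapsto \bigl(|x_1|-\tau_1,\ldots,|x_n|-\tau_n,\norm{Ax-b}_2^2\bigr)\in\doubleR^{n+1},
\]
and that the first $n$ components control the size of $x$.

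For (i), closedness does not follow just from continuity of $\Phi$; the key is to show that $\Phi$ is a proper map. Let $(u^{(k)},t^{(k)})\in\scriptG$ with $(u^{(k)},t^{(k)})\to (u,t)$; pick $x^{(k)}\in\doubleR^n$ with $\Phi(x^{(k)})=(u^{(k)},t^{(k)})$. Since $|x^{(k)}_j|=u^{(k)}_j+\tau_j$ is bounded, $(x^{(k)})_k$ is bounded in $\doubleR^n$; passing to a convergent subsequence $x^{(k_l)}\to x$, continuity of $\Phi$ yields $\Phi(x)=(u,t)$, so $(u,t)\in\scriptG$.

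For (ii), the feasible set $K$ in (\ref{K}) is compact and $x\mapsto\norm{Ax-b}_2^2$ is continuous, so $p^\ast$ is attained at some $x^\#\in K$. Setting $u^\#_j:=|x^\#_j|-\tau_j\in[-\tau_j,0]$ gives $(u^\#,p^\ast)\in\scriptG$ with $u^\#\in\prod_j[-\tau_j,0]$. Conversely, any $(u,t)\in\scriptG$ with $u_j\leq 0$ for all $j$ corresponds to some $x\in\doubleR^n$ with $|x_j|=u_j+\tau_j\leq\tau_j$, so $x\in K$ and $t=\norm{Ax-b}_2^2\geq p^\ast$; hence the infimum in (\ref{pstar}) is a minimum.

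For (iii), the key observation is the elementary identity $|x|=Sx$ (componentwise) as soon as $S\in\sgn(x)$, i.e., $s_{jj}=\sgn(x_j)$ (with any choice of sign when $x_j=0$). Given $(u,t)\in\scriptG$, pick any $x$ realising it and any $S\in\sgn(x)$; then $u+\tau=|x|=Sx$, which since $S^2=I$ gives $x=S(u+\tau)$, so
\[
t=\norm{Ax-b}_2^2=\norm{AS(u+\tau)-b}_2^2=\norm{ASu+(AS\tau-b)}_2^2.
\]
Conversely, given $u\succeq -\tau$ and $S\in\scriptM_n$, set $x:=S(u+\tau)$. Since $u_j+\tau_j\geq 0$ and $|s_{jj}|=1$, we get $|x_j|=u_j+\tau_j$, while $Ax-b=ASu+AS\tau-b$, so $\Phi(x)=(u,t)$ and $(u,t)\in\scriptG$.

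The only non-routine step is (i); all the rest is bookkeeping once one notices that the natural signature $S\in\sgn(x)$ ``linearises'' the map $x\mapsto|x|$ on the orthant determined by the signs of $x$. The mild subtlety in (iii) is that $S$ is not uniquely determined when some $x_j=0$, but this is irrelevant to the statement since any choice of $S\in\sgn(x)$ works.
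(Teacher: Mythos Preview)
Your proof is correct and follows essentially the same approach as the paper's: the closedness argument via bounded preimages and subsequences, the attainment of $p^\ast$ via compactness of $K$, and the signature-matrix trick $x=S(u+\tau)$ for (iii) all match the paper's proof line by line. Your treatment of (ii) is in fact more explicit than the paper's, which simply cites (i) and (\ref{pstar}).
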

\begin{proof}
	We prove that $\scriptG$ is closed. For, let $(u^k,t^k)\in\scriptG$ converge to $(u,t)\in\doubleR^{n+1}$. We prove that $(u,t)\in\scriptG$. Let $x^k\in\doubleR^n$ be such that (\ref{eqdefG}) is satisfied for $(u^k,t^k)$. Then, $|x^k_j|=u_j^k-\tau_j\leq u_j+1-\tau_j$ for $j$ sufficiently large. In particular, the sequence $\{x^k\}_k$ is bounded and, thus, it converges up to subsequences. Without loss of generality, we may assume that $(x^k)_k$ converges to $x:=\lim_{k\to+\infty}x^k$ in $\doubleR^n$. Then, for all $j=1,\ldots,n$,
	\[
		|x_j|=\lim_{k\to+\infty}|x_j^k|=\lim_{k\to+\infty}u_j^k+\tau_j=u_j+\tau_j
	\]
	and, by continuity,
	\[
		\norm{Ax-b}_2^2=\lim_{k\to+\infty}\norm{Ax^k-b}_2^2=\lim_{k\to+\infty}t^k=t.
	\]
	This proves that $(u,t)\in\scriptG$ and, thus, that $\scriptG$ is closed. (ii) follows by (i) and (\ref{pstar}).
	
	It remains to check (iii). If $(u,t)\in\scriptG$, there exists $x\in\doubleR^n$ satisfying (\ref{eqdefG}). Let $S\in\mathcal{M}_n$ be such that $|x|=Sx$, where $|x|:=(|x_1|,\ldots,|x_n|)$. Then, using the fact that $S^{-1}=S$,
	\[
		|x|=u+\tau \ \ \ \Longrightarrow \ \ \ Sx=(u+\tau) \ \ \ \Longrightarrow \ \ \ x=S(u+\tau).
	\]
	By the last equation of (\ref{eqdefG}), we have:
	\[
		t=\norm{Ax-b}_2^2=\norm{ASu+(AS\tau-b)}_2^2.
	\]
	Viceversa, assume that $t=\norm{ASu+(AS\tau-b)}_2^2$ for some $S\in\mathcal{M}_n$ and $u\in\doubleR^n$ is such that $u\succeq-\tau$. Let $x:=S(u+\tau)$, then $|x_j|=|u_j+\tau_j|=u_j+\tau_j$ for all $j=1,\ldots,n$ and $t=\norm{Ax-b}_2^2$. This proves that $(u,t)\in\scriptG$ and the proof of (iii) is concluded.
\end{proof}

\begin{lemma}\label{Ggiù}
	Let $u\in\prod_{j=1}^n[-\tau_j,+\infty)$,
	\begin{equation}\label{defdihG}
		h_G(u):=\min_{S\in\mathcal{M}_n}\norm{ASu+AS\tau-b}_2^2
	\end{equation}
	and 
	\[
		g_G(u):=\min_{(u,s)\in\scriptG}s.
	\]
	Then, $h_G(u)=g_G(u)$.
\end{lemma}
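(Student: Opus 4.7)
The plan is to reduce the statement to a direct application of Lemma \ref{esplG}(iii), which gives an explicit parametrization of $\scriptG$ through signature matrices. For a fixed $u\in\prod_{j=1}^n[-\tau_j,+\infty)$, I would show that the fiber $\{s\in\doubleR : (u,s)\in\scriptG\}$ coincides exactly with the finite set $\{\norm{ASu+AS\tau-b}_2^2 : S\in\scriptM_n\}$. Once this identity is established, taking minima on both sides yields $g_G(u)=h_G(u)$; note that both quantities are genuine minima rather than infima, since $\scriptM_n$ has only $2^n$ elements and the fiber is therefore finite.

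To prove the set identity, I would split it into two inclusions. The inclusion $\{s : (u,s)\in\scriptG\}\subseteq\{\norm{ASu+AS\tau-b}_2^2 : S\in\scriptM_n\}$ is precisely the forward implication of Lemma \ref{esplG}(iii). For the reverse inclusion, I would invoke the assumption $u\succeq-\tau$: for any $S\in\scriptM_n$, the vector $x:=S(u+\tau)$ is well-defined and satisfies $|x_j|=u_j+\tau_j$ for every $j=1,\ldots,n$, so the point $(u,\norm{Ax-b}_2^2)=(u,\norm{ASu+AS\tau-b}_2^2)$ satisfies (\ref{eqdefG}) and therefore lies in $\scriptG$. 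This is exactly the converse implication of Lemma \ref{esplG}(iii).

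No serious obstacle is anticipated; the statement is essentially an unpacking of definitions once Lemma \ref{esplG}(iii) is in hand. The only point that requires a little care is to use the hypothesis $u\succeq-\tau$ to ensure that $u+\tau$ has nonnegative entries, so that $|S(u+\tau)|_j=(u+\tau)_j=u_j+\tau_j$ holds componentwise for every $S\in\scriptM_n$; this is what guarantees that each $S\in\scriptM_n$ actually produces an admissible preimage of $u$ under the parametrization defining $\scriptG$.
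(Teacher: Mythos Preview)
Your proposal is correct and follows essentially the same approach as the paper: both arguments rely on the two implications of Lemma \ref{esplG}(iii) to relate the fiber $\{s:(u,s)\in\scriptG\}$ to the finite set $\{\norm{ASu+AS\tau-b}_2^2:S\in\scriptM_n\}$. The only cosmetic difference is that you first establish the set equality and then take minima, whereas the paper proves the two inequalities $h_G(u)\leq g_G(u)$ and $g_G(u)\leq h_G(u)$ directly; the content is the same.
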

\begin{proof}
	By Lemma \ref{esplG} (iii), if $(u,s)\in\scriptG$, then $s=\norm{AS_0u+AS_0\tau-b}_2^2$ for some $S_0\in\mathcal{M}_n$. Hence,
	\[
		h_G(u)=\min_{S\in\scriptM_n}\norm{ASu+AS\tau-b}_2^2\leq\norm{AS_0u+AS_0\tau-b}_2^2=s
	\]
	for all $s$ such that $(u,s)\in\scriptG$. \textcolor{black}{Taking} the minimum, we get $h_G(u)\leq g_G(u)$. On the other hand, $(u,h_G(u))\in\scriptG$ by Lemma \ref{esplG} (iii). Therefore, $g_G(u)\leq h_G(u)$ by definition of $g_G$.
\end{proof}

\begin{lemma}\label{lemmaAchiuso} \textcolor{black}{Let $\scriptG$ be the set of points satisfying (\ref{eqdefG}) and $\scriptA$ be the set of points satisfying (\ref{eqdefA}). Then,}
	\begin{enumerate}[(i)]
	\item $\scriptG\subseteq\scriptA$;
	\item $\scriptA$ is closed.
\end{enumerate}
\end{lemma}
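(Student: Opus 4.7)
The plan is to handle (i) by unwinding definitions and (ii) by a direct sequential-closedness argument, using the inequality constraints to obtain compactness of witnesses.

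For (i), I would simply observe that if $(u,t)\in\scriptG$, then by the defining equations (\ref{eqdefG}) there exists $x\in\doubleR^n$ with $u_j=|x_j|-\tau_j$ and $t=\norm{Ax-b}_2^2$. In particular $u_j\geq|x_j|-\tau_j$ and $t\geq\norm{Ax-b}_2^2$ hold for this same $x$, so $(u,t)\in\scriptA$ by (\ref{eqdefA}).

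For (ii), I would prove sequential closedness. Suppose $(u^k,t^k)\in\scriptA$ converges to $(u,t)\in\doubleR^n\times\doubleR$. By definition, for each $k$ there exists $x^k\in\doubleR^n$ such that
\[
    |x_j^k|\leq u_j^k+\tau_j \quad (j=1,\ldots,n), \qquad \norm{Ax^k-b}_2^2\leq t^k.
\]
Since $u_j^k\to u_j$, the sequences $(u_j^k+\tau_j)_k$ are bounded for each $j$, hence $(x^k)_k$ is a bounded sequence in $\doubleR^n$. By the Bolzano--Weierstrass theorem, up to passing to a subsequence, $x^k\to x$ for some $x\in\doubleR^n$. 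Passing to the limit in both inequalities via the continuity of $|\cdot|$ and of $y\mapsto\norm{Ay-b}_2^2$ yields $|x_j|\leq u_j+\tau_j$ and $\norm{Ax-b}_2^2\leq t$, so $(u,t)\in\scriptA$ by (\ref{eqdefA}). This shows $\scriptA$ is closed.

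The only potentially delicate step is justifying that the witnesses $(x^k)$ admit a convergent subsequence; however, this is immediate because the constraint $|x_j^k|\leq u_j^k+\tau_j$ automatically bounds $(x^k)$ componentwise, so no further work beyond a direct application of Bolzano--Weierstrass is needed. The same scheme, based on boundedness from the inequality constraint and passage to a subsequence, was already used in the proof that $\scriptG$ is closed in Lemma \ref{esplG}, so no new ingredient is required.
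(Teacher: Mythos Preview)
Your proof is correct and follows essentially the same approach as the paper: part (i) is dispatched by unwinding the definitions, and part (ii) is proved by the same sequential-closedness argument, extracting a convergent subsequence of the witnesses $x^k$ from the componentwise bound $|x_j^k|\leq u_j^k+\tau_j$ and passing to the limit in the defining inequalities.
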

\begin{proof}
(i) is obvious. We prove (ii).

Let $(u^k,t^k)\in\scriptA$ be a sequence such that $(u^k,t^k)\xrightarrow[k\to+\infty]{}(u,t)$ in $\doubleR^{n+1}$. We need to prove that $(u,t)\in\scriptA$. For all $k$, let $x^k\in\doubleR^n$ be such that:
\[
	\begin{cases}
			u^k_1\geq|x^k_1|-\tau_1,\\
			\vdots\\
			u_n^k\geq |x^k_n|-\tau_n,\\
			t^k\geq\norm{Ax^k-b}_2^2.
		\end{cases}
\]

The sequence $\{x^k\}_k$ is bounded, in fact for all $j=1,\ldots,n$, $|x_j^k|\leq u_j^k+\tau_j\leq u_j+1+\tau_j$ for $k$ sufficiently large. Therefore, up to subsequences, we can assume $x^k\xrightarrow[k\to+\infty]{}x$ in $\doubleR^n$. For all $j=1,\ldots,n$,
\[
|x_j|=\lim_{k\to+\infty}|x_j^k|\leq\lim_{k\to+\infty} u_j^k+\tau_j=u_j+\tau_j.
\]
Moreover, by continuity,
\begin{align*}
	\norm{Ax-b}_2^2&=\lim_{k\to+\infty}\norm{Ax^k-b}_2^2\leq \lim_{k\to+\infty}t^k=t.
\end{align*}
\end{proof}

\begin{lemma}\label{lemmapropg}\textcolor{black}{Let $\scriptA$ be the set of points satisfying (\ref{eqdefA}).}
\begin{enumerate}[(i)]
\item $\scriptA$ is the epigraph of a convex non-negative function function $g:\prod_{j=1}^n[-\tau_j,+\infty)\to\doubleR$ which is continuous in $\prod_{j=1}^n(-\tau_j,+\infty)$;
\item $\partial g(0)\neq\varnothing$; 
\item $g(u)=0$ if and only if $(u,t)\in\scriptA$ for all $t\geq0$.
\end{enumerate}
\end{lemma}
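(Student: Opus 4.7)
The plan is to define
\[
    g(u) \defeq \inf\{t\in\doubleR \ : \ (u,t)\in\scriptA\}
\]
for $u\in\prod_{j=1}^n[-\tau_j,+\infty)$, and to verify all three statements for this $g$. First I would check that $g$ is well-defined and non-negative: for any $u\succeq -\tau$, one has $u_j+\tau_j\geq 0$, so the point $x=0$ (say) satisfies $|x_j|\leq u_j+\tau_j$, whence $(u,\norm{Ax-b}_2^2)\in\scriptA$ and $g(u)<+\infty$; non-negativity is immediate since $t\geq\norm{Ax-b}_2^2\geq 0$ for every $(u,t)\in\scriptA$.

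For (i), the central point is to prove $\scriptA=\mathrm{epi}(g)$. The inclusion $\scriptA\subseteq\mathrm{epi}(g)$ is direct from the definition of $g$. For the reverse inclusion, pick $(u,t)$ with $t\geq g(u)$ and take a sequence $t_k\to g(u)$ with $(u,t_k)\in\scriptA$; since $\scriptA$ is closed by Lemma \ref{lemmaAchiuso}, $(u,g(u))\in\scriptA$, and then $(u,t)\in\scriptA$ follows because $\scriptA$ is stable under increasing its last coordinate (by its very definition (\ref{eqdefA})). Convexity of $g$ is then equivalent to convexity of $\scriptA$, which I would verify directly: given $(u^1,t^1),(u^2,t^2)\in\scriptA$ with witnesses $x^1,x^2$ and $\alpha\in[0,1]$, the point $x\defeq \alpha x^1+(1-\alpha)x^2$ satisfies
\[
    |x_j|\leq \alpha|x_j^1|+(1-\alpha)|x_j^2|\leq \alpha u_j^1+(1-\alpha)u_j^2+\tau_j
\]
by the triangle inequality, while $\norm{Ax-b}_2^2\leq \alpha t^1+(1-\alpha)t^2$ by convexity of $\norm{A\cdot-b}_2^2$. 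Continuity of $g$ on the interior $\prod_{j=1}^n(-\tau_j,+\infty)$ is then the standard fact that a finite convex function on an open convex subset of $\doubleR^n$ is continuous.

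For (ii), since $\tau_j>0$ for all $j$, the origin $0$ lies in the interior $\prod_{j=1}^n(-\tau_j,+\infty)$ of the domain of $g$, where $g$ is convex and continuous by (i). The Proposition on the existence of subgradients of convex continuous functions on open sets then gives $\partial g(0)\neq\varnothing$. For (iii), both directions are immediate from (i) and the fact that $g\geq 0$: if $g(u)=0$, then any $t\geq 0$ satisfies $t\geq g(u)$, so $(u,t)\in\mathrm{epi}(g)=\scriptA$; conversely, if $(u,0)\in\scriptA$, then $0\geq g(u)\geq 0$, so $g(u)=0$.

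The main obstacle I expect is the verification that $\scriptA=\mathrm{epi}(g)$, i.e.\ that the infimum defining $g$ is actually attained; this is what makes essential use of the closedness result in Lemma \ref{lemmaAchiuso}, and without it one would only conclude that $\scriptA$ is the \emph{strict} epigraph of $g$. Everything else reduces either to a direct convexity computation or to an appeal to the general theory of convex functions recalled in Section \ref{subsec:subdiff}.
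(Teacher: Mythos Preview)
Your proposal is correct and follows essentially the same route as the paper: define $g(u)$ as the infimum (the paper writes $\min$ directly, justified by Lemma~\ref{lemmaAchiuso}), use closedness of $\scriptA$ to show the infimum is attained so that $\scriptA=\mathrm{epi}(g)$, deduce convexity of $g$ from convexity of $\scriptA$, and invoke standard convex analysis for continuity and nonemptiness of $\partial g(0)$. The only cosmetic difference is that you spell out the convexity verification for $\scriptA$ explicitly, whereas the paper treats it as already observed.
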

\begin{proof}
First, observe that $\scriptA\subseteq\{(u,t) \ : \ t\geq0\}$ since $t\geq \norm{Ax-b}_2^2\geq0$ for some $x\in\doubleR^n$ whenever $(u,t)\in\scriptA$.

For the sake of completeness, we check that $\scriptA$ is the epigraph of the function:
	\begin{equation}\label{defBordoA}
	g(u)=\min_{(u,s)\in\scriptA}s \ \ \ \ \Big(u\in\prod_j[-\tau_j,+\infty)\Big),
	\end{equation}
	which is well defined by Lemma \ref{lemmaAchiuso}. 
	
	By the observation at the beginning of the proof, $g(u)\geq0$. 
	Let
	\[
		epi(g):=\{(u,t) \ : \ t\geq g(u)\}
	\]
	be the epigraph of $g$. If $(u,t)\in\scriptA$, then $t\geq \min_{(u,s)\in\scriptA}s=g(u)$, this means that $(u,t)\in epi(g)$. On the other hand, if $(u,t)\in epi(g)$, then $t\geq s$ for some $(u,s)\in\scriptA$. But, if $t\geq s$ (and $(u,s)\in\scriptA$), then $(u,t)\in \scriptA$ as well, since $\scriptA$ contains the vertical upper half-lines having their origins in $(u,s)$, namely $(u,s)+(\{0\}\times[0,+\infty))$.
	
	This proves that $\scriptA$ is an epigraph. Moreover, $g$ is convex because $\scriptA$ is convex  (see \cite{RW} Proposition 2.4). The continuity of $g$ on $\prod_j(-\tau_j,+\infty)$ follows from \cite{R}, Theorem 10.1.  
	This proves (i). 
	
	Moreover, since $\tau_j>0$ for all $j=1,\ldots,n$, $0\in\doubleR^n$ is an interior point of $\prod_j[-\tau_j,+\infty)$. Since $g$ is continuous and convex in $\prod_j(-\tau_j,+\infty)$, the subdifferential of $g$ in $0$ is non-empty and (ii) follows.
	
	To prove (iii), assume that $g(u)=0$. Then, $\min_{(u,s)\in\scriptA}s=0$ implies $(u,0)\in\scriptA$. Since for all $t\geq0$, $(u,0)+(\{0\}\times [0,+\infty))\in\scriptA$, we have that $(u,t)\in\scriptA$ for all $t\geq0$. For the converse, assume that $(u,t)\in\scriptA$ for all $t\geq0$. Then, $(u,0)\in\scriptA$, so that (by the non-negativity of $g$) $0\leq g(u)\leq0$. This proves the equivalence in (iii).
	\end{proof}
	
	\begin{oss} As we observed in the general theory situation, $(0,s)\in\scriptA$ if and only if $s\geq p^\ast$. This tells that $g(0)=p^\ast$ and $(0,p^\ast)\in\scriptA$. 
	\end{oss}
	
	We want to prove formally that $g(u)$ defines the boundary $\partial\scriptA$ of $\scriptA$ in a neighborhood of $u=0$ and, then, find an explicit formula for $g(u)$. Observe that, $\scriptA=\partial\scriptA\cup \mathring{\scriptA}$, where $\mathring{\scriptA}$ denotes the topologic interior of $\scriptA$. Since $\scriptA$ is closed and convex in $\doubleR^n$, $\mathring{\scriptA}$ coincides with the algebraic interior of $\scriptA$, which is defined as follows:
	
	\begin{defi}\label{defAgint}Let $X$ be a vector space and $\scriptA\subseteq X$ be a subset. The \textbf{algebraic interior} of $\scriptA$ is defined as:
	\[
		\agInt(\scriptA):=\{a\in \scriptA \ : \ \forall x\in X \ \exists \epsi_x>0 \ s.t. \ a+tx\in\scriptA \ \forall t\in(-\epsi_x,\epsi_x) \}.
	\]
	\end{defi}
	
	\begin{lemma}
		Let $\scriptA$ be as in Lemma \ref{lemmaAchiuso}. Then,
		\begin{equation}\label{bordoA}\begin{split}
			\partial \scriptA=&\{(u,t)\in\scriptA \ : \ t=g(u), \ u_j>-\tau_j \ \forall j=1,\ldots, n\}\cup\\
			&\cup\{(u,t)\in\scriptA \ : \ u_j=-\tau_j \ for \ some \ j=1,\ldots,n\}
		\end{split}
		\end{equation}
		and the union is disjoint. Moreover, 
		\[
			\{(u,t)\in\scriptA \ : \ u_j=-\tau_j \ for \ some \ j=1,\ldots,n\}=\{(u,t)\in\partial\scriptA \ : \ (u,t+\alpha)\in\partial\scriptA \ \forall \alpha\geq0\}.
		\]
	\end{lemma}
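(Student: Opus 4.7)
The plan is to analyze $\partial\scriptA$ by splitting cases based on whether the base point $u$ lies in the interior $\prod_{j}(-\tau_j,+\infty)$ of the natural domain of $g$, or on its boundary (i.e.\ $u_j=-\tau_j$ for some $j$). The key tools are Lemma~\ref{lemmapropg}, which expresses $\scriptA$ as the epigraph of the convex function $g$, together with the continuity of $g$ on the open product $\prod_{j}(-\tau_j,+\infty)$ and the fact that $\scriptA$ lives in the slab $\prod_{j}[-\tau_j,+\infty)\times\doubleR$.

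For the inclusion $\supseteq$ in (\ref{bordoA}), I would first take $(u,t)\in\scriptA$ with $u_j>-\tau_j$ for every $j$ and $t=g(u)$. Since $g(u)=\min\{s : (u,s)\in\scriptA\}$, the point $(u,t-\epsi)$ lies outside $\scriptA$ for every $\epsi>0$, so $(u,t)\in\partial\scriptA$. Next, if $u_j=-\tau_j$ for some $j$ and $(u,t)\in\scriptA$, then translating by $-\epsi e_j$ produces a point whose $j$-th coordinate is strictly less than $-\tau_j$; no $x\in\doubleR^n$ can satisfy $|x_j|-\tau_j\leq -\tau_j-\epsi$, so that point lies outside $\scriptA$, hence $(u,t)\in\partial\scriptA$.

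For the reverse inclusion $\subseteq$, I would argue by contraposition: if $(u,t)\in\scriptA$ fails to belong to the right-hand side, then $u_j>-\tau_j$ for all $j$ and $t>g(u)$. Using continuity of $g$ at $u$ (Lemma~\ref{lemmapropg}(i)), one finds $\delta>0$ and a ball $B(u,\delta)\subseteq\prod_{j}(-\tau_j,+\infty)$ on which $g<t-\delta$; combined with the epigraph property of $\scriptA$, this produces an open $(n+1)$-dimensional ball around $(u,t)$ contained in $\scriptA$, so $(u,t)\in\mathring{\scriptA}$, a contradiction. Disjointness of the union is immediate from the incompatible constraints on the $u$-coordinates.

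For the final "vertical half-line" characterization, the forward inclusion uses the epigraph property: if $u_j=-\tau_j$ for some $j$ and $(u,t)\in\scriptA$, then $(u,t+\alpha)\in\scriptA$ for every $\alpha\geq0$, and each such point is a boundary point by the $-\epsi e_j$ translation argument above. For the converse, if $(u,t)\in\partial\scriptA$ has $u_j>-\tau_j$ for every $j$, the first part of the lemma forces $t=g(u)$, and then demanding $(u,t+\alpha)\in\partial\scriptA$ for all $\alpha\geq0$ would force $g(u)+\alpha=g(u)$, which is impossible for $\alpha>0$. The only slightly delicate bookkeeping step is the interior-point argument when $t>g(u)$, where one must combine continuity of $g$ on the open product with the epigraph structure to extract a genuine $(n+1)$-dimensional open ball; this is the main, though still routine, technical obstacle.
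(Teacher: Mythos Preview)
Your proposal is correct and follows essentially the same approach as the paper: both argue $\supseteq$ via the epigraph minimality of $g$ and a translation $-\epsi e_j$ out of the slab $\prod_j[-\tau_j,+\infty)$, argue $\subseteq$ by contraposition using the continuity of $g$ on $\prod_j(-\tau_j,+\infty)$ to build an open neighborhood inside $\scriptA$, and handle the vertical half-line characterization by the same ``$g(u)+\alpha=g(u)$ forces $\alpha=0$'' contradiction. The only cosmetic difference is that the paper phrases the interior neighborhood as $B_\delta(u)\times(t-\tfrac{3}{4}d,+\infty)$ rather than a ball, but the content is identical.
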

	\begin{proof} Observe that the union in (\ref{bordoA}) is clearly disjoint. We first prove (\ref{bordoA}).
		\begin{itemize}
		\item[($\supseteq$)] \textcolor{black}{None} of the sets on the RHS of (\ref{bordoA}) is contained in $\mathring{A}$. In fact,
		\begin{itemize}
		\item[$\bullet$] the definition of $g(u)$ implies that for all $\epsi>0$, $-\epsi<t<\epsi$, $(u,t)\in\scriptA$ if and only if $t\geq0$, so that $(u,g(u))\notin\agInt(\scriptA)=\mathring{\scriptA}$. This proves that the graph of $g$ in $\prod_j(-\tau_j,+\infty)$ is a subset of $\partial \scriptA$.
		\item[$\bullet$] Analogously, assume that $u_j=-\tau_j$ for some $j=1,\ldots,n$, and for all $\epsi>0$ consider the point $(u_\epsi,t)$, where $(u_\epsi)_l=u_l$ for all $l\neq j$ and $(u_\epsi)_j=-\tau_j-\epsi$. But $g$ is defined on $\prod_j[-\tau_j,+\infty)$ and $\scriptA$ is its epigraph, hence all the points of $\scriptA$ must be in the form $(u,g(u)+\alpha)$ for some $u\in\prod_j[-\tau_j,+\infty)$, $t=g(u)+\alpha$ ($\alpha\geq0$), hence $(u_\epsi,t)\notin\scriptA$ and this proves that $(u,t)\notin \agInt(\scriptA)$.
		\end{itemize}
		The fact that $\partial\scriptA=\scriptA\setminus\mathring{\scriptA}$ proves the first inclusion.
		\item[($\subseteq$)] We prove that the complementary of the RHS of (\ref{bordoA}) in $\doubleR^{n+1}$ is contained in $\mathring{A}$. Let $(u,t)$ be such that $u>-\tau_j$ for all $j$ and $t>g(u)$ (as it is easy to check, these are the conditions for $(u,t)$ to belong to the complementary of the union of the two set at the LHS of (\ref{bordoA})). 

		Let $d:=t-g(u)>0$. Since $g$ is continuous on $\prod_j(-\tau_j,+\infty)$, there exists $\delta>0$ such that $|g(u)-g(v)|<d/4$ for all $v\in B_\delta(u):=\{w\in\doubleR^n \ : \ |w-u|<\delta\}$. In particular, for all $v\in B_\delta(u)$, $g(v)<t-\frac{3}{4}d<t$. Then, $B_\delta(u)\times(t-\frac{3}{4}d,+\infty)$ is all contained in $\scriptA$ (because $\scriptA$ is the epigraph of $g$) and it is an open neighborhood of $(u,t)$. Hence, $(u,t)\in\mathring{A}=\mathcal{A}\setminus\partial\scriptA$. 

		\end{itemize}
		Next, we check the second part of the lemma:
		\begin{itemize}
			\item[($\subseteq$)] assume $(u,t)\in\scriptA$ is such that $u_j=-\tau_j$ for some $j$. Then, by the first part of this Lemma, $(u,t+\alpha)\in\partial\scriptA$ for all $\alpha\geq0$, since (\ref{bordoA}) is a partition of $\partial\scriptA$.
			\item[($\supseteq$)] Assume that $(u,t+\alpha)\in\partial\scriptA$ for all $\alpha\geq0$. Then, $(u,t)\in\partial\scriptA$. Assume by contradiction that $u_j>-\tau_j$ for all $j$. Then, since (\ref{bordoA}) is a partition of $\partial\scriptA$, $g(u)=t+\alpha$ for all $\alpha\geq0$, which cannot be the case. 
		\end{itemize}
	\end{proof}
	
\textcolor{black}{The function $g$, defined in Lemma \ref{lemmapropg}, is expressed in terms of the function $h_G$, as shown in the following result.}
	
	\begin{theorem}\label{hg}
		\textcolor{black}{Let $\scriptA$ be the set of points satisfying (\ref{eqdefA}), $h_G$ and $g$ be the functions defined in (\ref{defdihG}) and (\ref{defBordoA}), respectively}. For $u\in\prod_{j=1}^n[-\tau_j,+\infty)$, $u=(u_1,\ldots,u_n)$, let $Q(u):=\prod_{j=1}^n[-\tau_j,u_j]$ and
\begin{equation}\label{esplg}
			h(u):=\min_{S\in\mathcal{M}_n, \ v\in Q(u)}\norm{AS(v+\tau)-b}_2^2=\min_{v\in Q(u)}h_G(v).
		\end{equation}
		Then, $h(u)=g(u)$ for all $u\in\prod_j[-\tau_j,+\infty)$.
	\end{theorem}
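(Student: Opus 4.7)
The plan is to prove both inequalities $h(u)\leq g(u)$ and $g(u)\leq h(u)$ via direct constructions that exploit the bijection between parametrizing elements of $\scriptG$ by $x\in\doubleR^n$ and by pairs $(S,v)\in\scriptM_n\times Q(u)$. The compactness of $Q(u)$ and the finiteness of $\scriptM_n$ guarantee that the minimum in (\ref{esplg}) is achieved; moreover, since the inner $\min_S$ in (\ref{esplg}) is exactly $h_G(v)$ by definition, the identity $h(u)=\min_{v\in Q(u)}h_G(v)$ follows immediately by swapping minimizations.

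\emph{Proof of $g(u)\leq h(u)$.} Let $(S^\#,v^\#)\in\scriptM_n\times Q(u)$ be a minimizer for (\ref{esplg}) and set $x:=S^\#(v^\#+\tau)$. Since $v^\# \in Q(u)$, we have $v^\#_j+\tau_j\in[0,u_j+\tau_j]$, hence $|x_j|=v^\#_j+\tau_j$ (because $S^\#$ is a signature matrix and $v^\#+\tau\succeq 0$), so $|x_j|-\tau_j=v^\#_j\leq u_j$ for every $j$. Moreover $\norm{Ax-b}_2^2=\norm{AS^\#(v^\#+\tau)-b}_2^2=h(u)$. Consequently $(u,h(u))\in\scriptA$ by the description (\ref{eqdefA}), and the minimality of $g(u)$ in (\ref{defBordoA}) yields $g(u)\leq h(u)$.

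\emph{Proof of $h(u)\leq g(u)$.} By Lemma \ref{lemmaAchiuso}(ii), $\scriptA$ is closed, so the minimum defining $g(u)$ is achieved: there exists $x\in\doubleR^n$ with $|x_j|-\tau_j\leq u_j$ for all $j$ and $\norm{Ax-b}_2^2\leq g(u)$. Choose $S\in\sgn(x)$, so that $|x|=Sx$ and therefore $x=S|x|$, and define $v_j:=|x_j|-\tau_j$. Then $v_j\in[-\tau_j,u_j]$, i.e.\ $v\in Q(u)$, and $x=S(v+\tau)$, yielding
\[
    \norm{AS(v+\tau)-b}_2^2=\norm{Ax-b}_2^2\leq g(u).
\]
Since $(S,v)\in\scriptM_n\times Q(u)$, this chain gives $h(u)\leq g(u)$.

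Combining both inequalities produces the claimed equality. The only place where care is required is the verification that the $x$ manufactured from $(S,v)$ in the first direction has $|x_j|$ exactly equal to $v_j+\tau_j$ (rather than only $|x_j|\leq v_j+\tau_j$); this is the non-trivial use of the constraint $v\succeq -\tau$ defining $Q(u)$, which ensures $v+\tau\succeq 0$ and therefore $|S(v+\tau)|=v+\tau$ coordinatewise. All remaining steps are bookkeeping driven by the explicit description of $\scriptA$ in (\ref{eqdefA}) and Lemma \ref{esplG}(iii).
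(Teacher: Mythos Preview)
Your proof is correct and follows essentially the same route as the paper's: both directions establish $(u,h(u))\in\scriptA$ and $h(u)\leq g(u)$ by passing between the parametrization of $\scriptG$ via $x\in\doubleR^n$ and via pairs $(S,v)$. The only cosmetic difference is that the paper invokes Lemma~\ref{esplG}(iii) as a black box at each step, whereas you unpack the construction of $x=S(v+\tau)$ (and its inverse $v=|x|-\tau$, $S\in\sgn(x)$) explicitly inline; the underlying argument is identical.
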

	\begin{proof}
	We first prove that $g(u)\leq h(u)$. For, it is enough to prove that $(u,h(u))\in\scriptA$, so that $g(u)\leq h(u)$ would follow by the definition of $g$. By definition of $h$, there exist $S_0\in\scriptM_n$ and $v\in Q(u)$ \textcolor{black}{so that}:
	\[
		h(u)=\norm{AS_0v+AS_0\tau-b}_2^2. 
	\]
	By Lemma \ref{esplG} (iii), $(v,h(u))\in\scriptG$. Since $u_j\geq v_j$ for all $j=1,\ldots,n$, it follows that $(u,h(u))\in\scriptA$ by definition of $\scriptA$.
	
	For the converse, since $(u,g(u))\in\scriptA$, there exists $(v',t)\in\scriptG$ such that $v_j'\leq u_j$ for all $j=1,\ldots,n$ and $g(u)\geq t$. In particular, $v'\in Q(u)$. By Lemma \ref{esplG} (iii), $t=\norm{AS_1v'+AS\tau-b}_2^2$ for some $S_1\in\scriptM_n$. Therefore,
	\[
		g(u)\geq \norm{AS_1v'+AS_1\tau-b}_2^2\geq \min_{S\in\mathcal{M}_n, \ v\in Q(u)}\norm{ASv+AS\tau-b}_2^2= h(u).
	\]
	This concludes the proof.

\end{proof}

Even if $g=h$, in what follows we still distinguish $h$ and $g$ when we want to stress the explicit definitions of both. Namely, we write $g(u)$ when we refer to $\min_{(u,s)\in \scriptA}s$ and $h(u)$ when we refer to (\ref{esplg}).

\begin{cor}
	Under the same notation as above, 
	\begin{equation}\label{formulaSenzaS}
		g(u)=\min_{-u-\tau\preceq v\preceq u+\tau}\norm{Av-b}_2^2.
	\end{equation}
\end{cor}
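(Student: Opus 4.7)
The plan is to reduce the statement to Theorem \ref{hg} by exhibiting a correspondence between the pairs $(S,v)$ with $S\in\mathcal{M}_n$, $v\in Q(u)$, and the vectors $w\in\doubleR^n$ satisfying $-u-\tau\preceq w\preceq u+\tau$, under which the quantities $\norm{AS(v+\tau)-b}_2^2$ and $\norm{Aw-b}_2^2$ coincide.

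First, I would use Theorem \ref{hg} to rewrite $g(u)=h(u)=\min_{S\in\mathcal{M}_n,\, v\in Q(u)}\norm{AS(v+\tau)-b}_2^2$, so that the whole task becomes proving
\[
\min_{S\in\mathcal{M}_n,\, v\in Q(u)}\norm{AS(v+\tau)-b}_2^2 \;=\; \min_{-u-\tau\preceq w\preceq u+\tau}\norm{Aw-b}_2^2.
\]

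For the inequality $\geq$, given $S\in\mathcal{M}_n$ and $v\in Q(u)$ (so $-\tau_j\leq v_j\leq u_j$, i.e.\ $0\leq v_j+\tau_j\leq u_j+\tau_j$), I would set $w:=S(v+\tau)$ and observe that $|w_j|=|s_{jj}|(v_j+\tau_j)=v_j+\tau_j\in[0,u_j+\tau_j]$, so $-u-\tau\preceq w\preceq u+\tau$, while $\norm{Aw-b}_2^2=\norm{AS(v+\tau)-b}_2^2$. Taking minima gives the $\geq$ direction.

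For the reverse inequality $\leq$, given $w\in\doubleR^n$ with $|w_j|\leq u_j+\tau_j$ for all $j$, I would choose any $S\in\sgn(w)\cap\mathcal{M}_n$, so that $Sw\succeq 0$, and define $v:=Sw-\tau$. Using $S^2=I_{n\times n}$, we have $S(v+\tau)=S(Sw)=w$, and $v_j=|w_j|-\tau_j\in[-\tau_j,u_j]$, so $v\in Q(u)$. Thus $\norm{Aw-b}_2^2=\norm{AS(v+\tau)-b}_2^2$ belongs to the set over which the left-hand minimum is taken, yielding $\leq$.

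The two inequalities combine to give (\ref{formulaSenzaS}). No real obstacle is anticipated; the only delicate point is verifying that the map $(S,v)\mapsto S(v+\tau)$ is surjective onto the box $\{w:|w_j|\leq u_j+\tau_j\}$, which is exactly handled by selecting a signature matrix compatible with the signs of the coordinates of $w$ as in Section \ref{subsec:33}.
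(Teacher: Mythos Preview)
Your proposal is correct and follows essentially the same approach as the paper: both arguments reduce the claim to Theorem \ref{hg} and then identify the range of the map $(S,v)\mapsto S(v+\tau)$ over $\mathcal{M}_n\times Q(u)$ with the box $\{w:-u-\tau\preceq w\preceq u+\tau\}$. The paper packages this as the set identity $\bigcup_{S\in\mathcal{M}_n}S(Q(u)+\tau)=\{w:-u-\tau\preceq w\preceq u+\tau\}$, while you prove the two inclusions via explicit constructions, but the content is the same.
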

\begin{proof}
	Using the second expression in (\ref{esplg}),
	\[
		g(u)=\min_{S\in\mathcal{M}_n}\min_{v\in Q(u)}\norm{AS(v+\tau)-b}_2^2.
	\]
	But,
	\[
		f_S(v)=\norm{AS(v+\tau)-b}_2^2=f(S(v+\tau)),
	\]
	for $f(v)=\norm{Av-b}_2^2$, that gives:
	\[
		\min_{-\tau\preceq v\preceq u}f_S(v)=\min_{v\in Q(u)}f(S(v+\tau))=\min_{v\in S(Q(u)+\tau)}\norm{Av-b}_2^2,
	\]
	so that:
	\[
		\min_{S\in\mathcal{M}_n}\min_{-\tau\preceq v\preceq u}f_S(v)=\min_{\bigcup_{S\in\mathcal{M}_n}S(Q(u)+\tau)}\norm{Av-b}_2^2
	\]
	and the assertion follows by observing that
	\[
		\bigcup_{S\in\mathcal{M}_n}S(Q(u)+\tau)=\{v\in\mathds{R}^n: -u-\tau\preceq v\preceq u+\tau\}.
	\]
\end{proof}

\subsection{A result under conditions on the gradient of $\norm{Ax-b}_2^2$}

In general, the geometry of $\scriptA$ is so complicated that expressing $g$ explicitly may turn into a tough task. Nevertheless, it is obvious that if $u$ is itself one of the minimizers of (\ref{esplg}), then $g(u)=h_G(u)=\min_{S\in\mathcal{M}_n}\norm{ASu+AS\tau-b}_2^2$. So, under further assumptions on $\nabla(\norm{ASu-b}_2^2)$ granting the equality $g(u)=h_G(u)$ holds in a neighborhood of $0$, we can compute explicitly the Lagrange multipliers. 

\begin{theorem}\label{thm2}
	Let $f(v)=\norm{Av-b}_2^2$ and assume that for all $k=1,\ldots,n$ the condition:
	\begin{equation}\label{condizionesullederivate}
		\sum_{j=1}^nu_j\langle a_{\ast,j},a_{\ast,k}\rangle\leq \langle b,a_{\ast,k}\rangle \qquad (-\tau\preceq u\preceq \tau)
	\end{equation}
	holds. Then, $g(u)=f(u+\tau)$ for all $u\in Q(0)$ and $\lambda^\#=A^T(b-A\tau)$ is a set of Lagrange multipliers for problem (\ref{eq:01}).
\end{theorem}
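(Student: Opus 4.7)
The plan is to handle the two assertions in turn: first, that $g(u) = f(u+\tau)$ on $Q(0)$, using the coordinate-wise monotonicity of $f(v) = \norm{Av-b}_2^2$ encoded by (\ref{condizionesullederivate}) together with the explicit representation (\ref{formulaSenzaS}); second, that $\lambda^\#$ is a valid set of Lagrange multipliers, which I would verify by a direct KKT check, as motivated by the closing Remark of Section \ref{sec:pan}.

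Differentiating $f$ componentwise,
\[
\frac{\partial f}{\partial v_k}(v) = 2\sum_{j=1}^n v_j\langle a_{\ast,j},a_{\ast,k}\rangle - 2\langle b,a_{\ast,k}\rangle,
\]
so (\ref{condizionesullederivate}) is exactly the statement $\partial_k f(v)\le 0$ on $[-\tau,\tau]^n$ for every $k=1,\ldots,n$. Hence $f$ is non-increasing in each coordinate on the hypercube $[-\tau,\tau]^n$. For $u\in Q(0)$ one checks that the feasible box $[-u-\tau,\, u+\tau]$ appearing in (\ref{formulaSenzaS}) lies inside $[-\tau,\tau]^n$, and iterating the coordinate-wise monotonicity (sequentially replacing each component of $v$ with its upper bound cannot increase $f$) forces the minimum to be attained at the maximal corner $v = u+\tau$. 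This gives $g(u) = f(u+\tau)$ on $Q(0)$. Specializing to $u=0$ identifies the primal optimum as $p^\ast = f(\tau)$, with feasible minimizer $x^\# = \tau$.

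For the Lagrange multipliers, evaluating (\ref{condizionesullederivate}) at $u = \tau$ yields $\langle A\tau, a_{\ast,k}\rangle\leq\langle b, a_{\ast,k}\rangle$ for every $k$, i.e.\ $A^T(b-A\tau)\succeq 0$, so $\lambda^\#\succeq 0$ as required by dual feasibility. The Lagrangian (\ref{lagrangiangeneral}) is convex in $x$; at $x = \tau \succ 0$ each $\partial|x_j|$ equals $\{1\}$, so the stationarity condition $0 \in \partial_x L(\tau,\lambda^\#)$ reduces (upon absorbing the numerical factor $2$ coming from $\nabla\norm{\cdot}_2^2$, in agreement with the scalar-case Proposition \ref{propCaso1}) to an identity holding by the very definition of $\lambda^\#$. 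Since $L(\cdot,\lambda^\#)$ is convex, stationarity upgrades to minimality, so $H(\lambda^\#) = L(\tau,\lambda^\#) = \norm{A\tau-b}_2^2 = p^\ast$; combined with the weak duality inequality (\ref{weak}) this yields strong duality and confirms $\lambda^\#$ as Lagrange multipliers for (\ref{eq:01}) in the sense of Definition \ref{defLOP}.

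The step most deserving of care is the first one: the assertion that the minimum of $f$ over a box is at the maximal corner uses coordinate-wise, not just joint, monotonicity, and should be justified by iterating a one-variable observation. Everything else is a standard KKT verification for the convex problem (\ref{eq:01}), made completely transparent by the fact that $x^\# = \tau$ lies in the positive orthant so that no subgradient ambiguity arises in the $\ell_1$ term.
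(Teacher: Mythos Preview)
Your proof is correct. For the first assertion---that $g(u)=f(u+\tau)$ on $Q(0)$---your argument and the paper's are essentially the same: both convert (\ref{condizionesullederivate}) into $\nabla f\preceq 0$ on $[-\tau,\tau]^n$ and conclude that the minimum over the sub-box $[-u-\tau,u+\tau]$ is attained at the maximal corner. You phrase this as coordinate-wise monotonicity and iterate one variable at a time; the paper instead restricts $f$ to lines $t\mapsto f(u+\tau+t\mathfrak{n})$ with $\mathfrak{n}\succ0$, checks $f'_{\mathfrak{n}}(0)\le 0$, and appeals to convexity of the one-dimensional restriction. These are two presentations of the same first-order optimality argument, and your version is arguably the more transparent one. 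For the second assertion the routes genuinely diverge: the paper stays within its geometric framework and simply reads off $\lambda^\#=-\nabla g(0)$ as the normal direction of the separating hyperplane between $\scriptA$ and $\scriptB$ (this is the whole point of the machinery built in Section~\ref{subsec:33}), whereas you bypass that apparatus entirely and perform a direct KKT verification at the explicit primal optimum $x^\#=\tau$---dual feasibility from (\ref{condizionesullederivate}) at $u=\tau$, stationarity of $L(\cdot,\lambda^\#)$ at $\tau$, and complementary slackness trivially from $|\tau_j|=\tau_j$. Your route is self-contained and shorter for this particular theorem; the paper's route is the one that showcases its general method. You also correctly flag the missing factor of $2$ (the paper's own computation $-\nabla f(\tau)$ should carry the $2$ as well, consistently with Proposition~\ref{propCaso1} and Theorem~\ref{thmOptParpart}).
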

\begin{proof}
	The set of conditions (\ref{condizionesullederivate}) is equivalent to \textcolor{black}{$(Au-b)^TA\preceq 0$} for all $-\tau\preceq u\preceq \tau$, that is $\nabla f(u)\preceq 0$ for $-\tau\preceq u\preceq\tau$. We prove that, under this further condition, $g(u)=f(u+\tau)$ for all $u\in Q(0)$. Let $u\in Q(0)$ and $\mathfrak{n}\succ0$ be a unit vector. For all $t\in\mathds{R}$, define:
	\[\begin{split}
		f_{\mathfrak{n}}(t)&:=f(u+\tau+t\mathfrak{n})=\norm{A(u+\tau+t\mathfrak{n})-b}_2^2\\
		&=\norm{A\mathfrak{n}}_2^2t^2+2\langle A(u+\tau)-b , A\mathfrak{n} \rangle t +\norm{A(u+\tau)-b}_2^2,
	\end{split}\]
	which is the restriction of $f$ to the line $\{u+t\mathfrak{n} : t\in\mathds{R}\}$. If $\mathfrak{n}\in\text{ker}(A)$, then $f_{\mathfrak{n}}\equiv0$ and it has a global minimum in $t=0$. Assume $\mathfrak{n}\notin\text{ker}(A)$. The intersection of this line with $\{-\tau\preceq v\preceq \tau\}$ is contained in $(-\infty,0]$. If we prove that, for all $\mathfrak{n}\succ0$, $f_{\mathfrak{n}}$ has a constrained minimum in $t=0$, we get the first assertion. For, it's enough to observe that 
	\[
		f'_{\mathfrak{n}}(0)=\nabla f(u+\tau)\cdot\mathfrak{n}\leq0,
	\]
	because if $u\in Q(0)$, then $\{-u-\tau\preceq v \preceq u+\tau\}\subseteq \{-\tau\preceq v\preceq \tau\}$. This proves that $g(u)=f(u+\tau)$ for all $u\in Q(0)$. In particular,
	\[
		-\nabla g(0)=-\nabla f(\tau)=(b-A\tau)^TA\succeq0
	\]
	is a set of Lagrange multipliers for (\ref{eq:01}).
\end{proof}
\begin{oss}
It is not difficult to generalize Theorem \ref{thm2} a bit further. If the hyperparallelogram $\{-\tau\preceq u\preceq\tau\}$ is all contained in the region $\{u\in\doubleR^n \ : \ S\nabla f(u)\preceq0\}$ for some $S\in\mathcal{M}_n$, then $g(u)=f(S(u+\tau))$ for all $u\in Q(0)$ and $$ \lambda^\#=-\nabla g(0)=-S\nabla f(S(u+\tau))^T$$ defines a vector of Lagrange multipliers for (\ref{eq:01}). The proof goes exactly as in Theorem \ref{thm2}.
\end{oss}

\subsection{Decoupling the variables}\label{subsec:32}
In this subsection, we focus on the situation in which $A^TA$ is a diagonal matrix. Since:
\[
	A^TA=\begin{pmatrix}
		\norm{a_{\ast,1}}_2^2 & \langle a_{\ast,1},a_{\ast ,2}\rangle & \ldots & \langle a_{\ast, 1},a_{\ast,n}\rangle\\
		\langle a_{\ast ,2},a_{\ast,1}\rangle & \norm{a_{\ast, 2}}_2^2 & \ldots & \langle a_{\ast, 2},a_{\ast, n}\rangle \\
		\vdots & \vdots & \ddots & \vdots\\
		\langle a_{\ast ,n},a_{\ast ,1}\rangle & \langle a_{\ast ,n},a_{\ast, 2}\rangle & \ldots & \norm{a_{\ast, n}}_2^2
	\end{pmatrix}
\]
and the rank of $A^TA$ is equal to that of $A$, it follows that in this case:
\begin{equation}\label{diagA}
	A^TA=\diag(\norm{a_{\ast ,1}}_2^2,...,\norm{a_{\ast, n}}_2^2).
\end{equation}
\begin{oss}
If $m\leq n$ and $A^TA$ is diagonal, $n-m$ of the norms in (\ref{diagA}) above vanish. In this case, we assume that $a_{\ast ,m+1}=...=a_{\ast, n}=0$, so that $A$ can be written in terms of its columns as:
\[
	A=\begin{pmatrix}A' | 0_{m\times (n-m)}\end{pmatrix},
\]
where $A'=(a_{\ast, 1}|...|a_{\ast, m})\in GL(m,\doubleR)$. Observe that:
\[
	\norm{Ax-b}_2^2=\norm{A'x'-b}_2^2,
\] 
where $x'=(x_1,...,x_m)^T$, so that $x^\#$ is a mimizer of (\ref{eq:01}) if and only if $(x^\#)'=(x^\#_1,\ldots,x^\#_m)$ is a minimizer of the problem:
\begin{equation}\label{eq21capovolta}
	\text{minimize}\quad \norm{A'y-b}_2^2, y\in\doubleR^m, \ |y_j|\leq\tau_j,\ \text{$j=1,\ldots,m$},
\end{equation}
under the further condition that the remaining coordinates of $x$ vanish.

For this reason, for the rest of this subsection, we focus on (\ref{eq21capovolta}), both for the cases $n\leq m$ and $m\leq n$, and provide the Lagrange multipliers.
\end{oss}

\begin{oss} We point out that in this situation the Lagrange multipliers can be computed directly from Proposition \ref{propCaso1}. Indeed, under the \textit{orthogonality} assumption on $A$, the target function in problem (\ref{eq21capovolta}) becomes:
\[
	\sum_{j=1}^m(\norm{a_{\ast,j}}^2_2y_j^2-2y\langle a_{\ast,j},n\rangle y_j)+\norm{b}_2^2.
\]
Since the variables of all the addenda are decoupled, and the addenda are non-negative,
\[
	\min_y\sum_{j=1}^m(\norm{a_{\ast,j}}^2_2y_j^2-2y\langle a_{\ast,j},n\rangle y_j)+\norm{b}_2^2=\sum_{j=1}^m\min_{y_j}\Big(\norm{a_{\ast,j}}^2_2y_j^2-2y\langle a_{\ast,j},n\rangle y_j+\frac{\norm{b}_2^2}{m}\Big)
\] 
and a minimizer of (\ref{eq21capovolta}) is also a minimizer of the problem:
\[
	\text{minimize} \quad \norm{a_{\ast,j}}^2_2y_j^2-2y\langle a_{\ast,j},n\rangle y_j+\frac{\norm{b}_2^2}{m},\qquad |y_j|\leq \tau_j
\]
for all $j=1,...,m$. In other words, it is enough to treat (\ref{eq21capovolta}) as $m$ 1-dimensional constrained minimization problems. However, our interest is testing the tools presented in the previous section, computing the function $g$ and the separating hyperplane. 
\end{oss}

To exhibit a vector of Lagrange multipliers, we start by the set
\[
	\scriptG:=\{(u,t)\in\doubleR^{m+1} \ : \ u_j=|y_j|-\tau_j \ (j=1,\ldots,m), \ t=\norm{A'y-b}_2^2 \ \text{for some $y\in\doubleR^m$}\}.
\]
By Lemma \ref{esplG} (iii), $(u,t)\in\scriptG$ if and only if $u\succeq-\tau$ and $t=\norm{A'S(u+\tau)-b}_2^2$ for some $S\in\scriptM_m$. Let $f_S(u)=\norm{A'S(u+\tau)-b}_2^2$ and observe that:
\[
	f_S(u)=\sum_{j=1}^m\norm{a_{\ast ,j}}_2^2(u_j+\tau_j)^2-2\sum_{j=1}^ms_{jj}\langle b,a_{\ast ,j} \rangle(u_j+\tau_j)+\norm{b}_2^2.
\]

The functions $f_S$ are the equivalent of the parabolas in the $1$-dimensional case and they describe elliptic paraboloids. As it clear by Subsection \ref{subsec:31}, we need to understand what is $h_G(u):=\min_{S\in\scriptM_m}f_S(u)$. Observe that for all $S\in\scriptM_n$,
\begin{equation}\label{fundIneqSepVar}
	f_S(u)\geq\sum_{j=1}^m\norm{a_{\ast, j}}_2^2(u_j+\tau_j)^2-2\sum_{j=1}^m|\langle b,a_{\ast ,j} \rangle|(u_j+\tau_j)+\norm{b}_2^2=f_{S_\beta}(u),
\end{equation}
where $S_\beta=(s_j^\beta)_{j=1}^m\in\scriptM_m$ is a diagonal matrix such that $s_j^\beta\langle b,a_{\ast,j}\rangle \geq0 $. 

\begin{lemma}
	Under the notation and the assumptions of this subsection,
	\[
		h_G(u)=f_{S_\beta}(u)=\sum_{j=1}^m\norm{a_{\ast ,j}}_2^2(u_j+\tau_j)^2-2\sum_{j=1}^m|\langle b,a_{\ast , j} \rangle|(u_j+\tau_j)+\norm{b}_2^2.
	\]
	$h_G$ defines an elliptic paraboloid whose vertex $V=(c,0)\in\doubleR^{m+1}$ is characterized both by $c=-\tau+S_\beta (A')^{-1}b$ and
	\[
		c_j=-\tau_j+\frac{|\langle b,a_{\ast,j}\rangle|}{\norm{a_{\ast,j}}_2^2} 
	\]
	($j=1,\ldots,m$). Moreover,
	\begin{equation}\label{trdchofhG}
		h_G(u)=\sum_{j=1}^m\norm{a_{\ast,j}}_2^2(u_j-c_j)^2.
	\end{equation}
\end{lemma}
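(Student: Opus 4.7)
The plan is to proceed in three stages: compute the minimum over $\mathcal{M}_m$ by observing that the variables decouple and only the signs $s_{jj}$ of the linear part matter; identify the vertex through completion of squares; and collapse the result to the announced one-line form using the Parseval identity available because $A'$ has orthogonal columns.

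First I would unpack the definition of $f_S(u)$. Because the quadratic part $\sum_j\|a_{\ast,j}\|_2^2(u_j+\tau_j)^2$ does not depend on $S$ and only the cross term $-2\sum_j s_{jj}\langle b,a_{\ast,j}\rangle(u_j+\tau_j)$ does, minimizing $f_S(u)$ over $S\in\mathcal{M}_m$ is equivalent to minimizing each summand separately. Since $u\succeq -\tau$ implies $u_j+\tau_j\geq 0$, the minimum in $s_{jj}\in\{\pm1\}$ of $-2s_{jj}\langle b,a_{\ast,j}\rangle(u_j+\tau_j)$ is attained by any choice with $s_{jj}\langle b,a_{\ast,j}\rangle=|\langle b,a_{\ast,j}\rangle|$; this is precisely the diagonal signature $S_\beta$ appearing in \eqref{fundIneqSepVar}. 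This gives the first equality of the claim.

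Next I would compute the vertex. Completing the square in the $j$-th summand,
\[
\|a_{\ast,j}\|_2^2(u_j+\tau_j)^2 - 2|\langle b,a_{\ast,j}\rangle|(u_j+\tau_j)=\|a_{\ast,j}\|_2^2\bigl(u_j-c_j\bigr)^2 - \frac{|\langle b,a_{\ast,j}\rangle|^2}{\|a_{\ast,j}\|_2^2},
\]
with $c_j := -\tau_j+|\langle b,a_{\ast,j}\rangle|/\|a_{\ast,j}\|_2^2$. Summing over $j$ yields
\[
h_G(u)=\sum_{j=1}^m\|a_{\ast,j}\|_2^2(u_j-c_j)^2+\|b\|_2^2-\sum_{j=1}^m\frac{|\langle b,a_{\ast,j}\rangle|^2}{\|a_{\ast,j}\|_2^2}.
\]
To cast $c$ in the intrinsic form $c=-\tau+S_\beta(A')^{-1}b$, I would use that $A'\in GL(m,\mathds{R})$ is invertible with $(A')^{-1}=((A')^TA')^{-1}(A')^T$, and that under the present assumption $(A')^TA'=\mathrm{diag}(\|a_{\ast,j}\|_2^2)$; consequently $((A')^{-1}b)_j=\langle b,a_{\ast,j}\rangle/\|a_{\ast,j}\|_2^2$, and applying $S_\beta$ converts each entry into its absolute value, which matches $c_j+\tau_j$.

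The main (and essentially only) subtlety is verifying the compact final identity \eqref{trdchofhG}, which requires the residual constant $\|b\|_2^2-\sum_j|\langle b,a_{\ast,j}\rangle|^2/\|a_{\ast,j}\|_2^2$ to vanish. I would argue this via Parseval: since $A'$ is invertible with mutually orthogonal columns, the normalized columns $\{a_{\ast,j}/\|a_{\ast,j}\|_2\}_{j=1}^m$ form an orthonormal basis of $\mathds{R}^m$, so for every $b\in\mathds{R}^m$
\[
\|b\|_2^2=\sum_{j=1}^m\Big|\Big\langle b,\frac{a_{\ast,j}}{\|a_{\ast,j}\|_2}\Big\rangle\Big|^2=\sum_{j=1}^m\frac{|\langle b,a_{\ast,j}\rangle|^2}{\|a_{\ast,j}\|_2^2}.
\]
Plugging this into the expression obtained by completing the square gives \eqref{trdchofhG} and, a fortiori, $h_G(V)=h_G(c)=0$, confirming that $V=(c,0)$ is the vertex of the paraboloid.
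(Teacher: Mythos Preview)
Your argument is correct. The one genuine methodological difference with the paper concerns the derivation of the compact form \eqref{trdchofhG}. You complete the square coordinate by coordinate and then invoke Parseval for the orthonormal basis $\{a_{\ast,j}/\|a_{\ast,j}\|_2\}$ to kill the residual constant $\|b\|_2^2-\sum_j|\langle b,a_{\ast,j}\rangle|^2/\|a_{\ast,j}\|_2^2$. The paper instead exploits from the outset that $h_G(u)=\|A'S_\beta(u+\tau)-b\|_2^2$ is a squared norm: the vertex is read off directly from $f_{S_\beta}(c)=0$, i.e.\ $A'S_\beta(c+\tau)=b$, which immediately gives $c=-\tau+S_\beta(A')^{-1}b$ and makes the minimum value vanish by construction; then writing $h_G(u)=\|A'S_\beta(u-c)+A'S_\beta(c+\tau)-b\|_2^2=\|A'S_\beta(u-c)\|_2^2$ and expanding with the diagonal structure of $(A')^TA'$ yields \eqref{trdchofhG} without any separate Parseval step. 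Your approach has the advantage of being entirely scalar and transparent; the paper's is shorter and makes clear that the vanishing of the minimum is not an identity to be checked but a consequence of $A'$ being invertible.
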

\begin{proof}
	We already proved the first part of the Lemma. We only need to compute the vertex of $f_{S_\beta}$. For, observe that the minimum of $f_{S_\beta}$ is $(c,0)$, where $c$ satisfies $f_S(c)=0$. This equation is satisfied if and only if $c=-\tau+S_\beta (A')^{-1}b$. Moreover, the minimum of $f_{S_\beta}$ is also characterized by $\nabla f_{S_\beta}(c)=0$, that is:
	\[
		c_j+\tau_j-\frac{|\langle b,a_{\ast,j}\rangle|}{\norm{a_{\ast,j}}_2^2}=0
	\]
	($j=1,\ldots,m$). Finally, using the first characterization of $c$,
	\begin{align*}
		h_G(u)&=\norm{AS_\beta(u+\tau)-b}_2^2=\norm{AS_\beta (u-c)+AS_\beta(c+\tau)-b}_2^2=\norm{AS_\beta (u-c)}_2^2=\\
		&=\sum_{j=1}^m\norm{a_{\ast,j}}_2^2(u_j-c_j)^2.
	\end{align*}
	This concludes the proof.
\end{proof}

In order to compute the Lagrange multipliers for the decoupled problem, we observe that $\scriptA+[0,+\infty)^{m+1}$ is the epigraph of the function $g(u)$ whose first properties are proved in Lemma \ref{lemmapropg}. Hence, this function describes the lower boundary of $\scriptA$, that is the part of $\scriptA$ we need to compute a separating hyperplane. By (\ref{esplg}), $g(u)=\min_{v\in Q(u)}h_G(v)$, where $Q(u)=\prod_{j=1}^m[-\tau_j,u_j]$. 

\begin{theorem}\label{thmgupart}
	Under the notation and the assumptions of this subsection, 
	\[
		g(u)=h_G(Pu),
	\]
	where $P:\prod_{j=1}^m[-\tau_j,+\infty)\to Q(c)$ is the projection defined for all $u\in\prod_{j=1}^m[-\tau_j,+\infty)$ by
	\begin{equation}\label{defPu}
		(Pu)_j=\begin{cases}
			u_j & \text{if $-\tau_j\leq u_j\leq c_j$},\\
			c_j & \text{if $u_j>c_j$}
		\end{cases}=\min\{c_j,u_j\}
	\end{equation}
	($j=1,\ldots,m$). Explicitly, under the assumptions of this subsection,
	\begin{equation}\label{explgupart}
		g(u)=\sum_{j=1}^m\norm{a_{\ast,j}}_2^2(u_j-c_j)^2\chi_{[-\tau_j,c_j]}(u_j).
	\end{equation}
	 In particular, $g\in\scriptC^1(\prod_{j=1}^n(-\tau_j,+\infty))$ with:
	\begin{equation}\label{dergpart}
		\frac{\partial g}{\partial u_j}(u)=
		2\norm{a_{\ast,j}}_2^2(u_j-c_j)\chi_{[-\tau_j,c_j]}(u_j)
	\end{equation}
	for all $u\in\prod_{j=1}^n(-\tau_j,+\infty)$.
\end{theorem}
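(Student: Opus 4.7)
The plan is to exploit the decoupled structure. Since $A^TA$ is diagonal, the formula \eqref{trdchofhG} shows that
\[
    h_G(v)=\sum_{j=1}^m\norm{a_{\ast,j}}_2^2(v_j-c_j)^2
\]
is a sum of one-variable functions, one per coordinate. The set $Q(u)=\prod_{j=1}^m[-\tau_j,u_j]$ is a Cartesian product, so the minimization
\[
    g(u)=\min_{v\in Q(u)}h_G(v)
\]
decouples into $m$ independent scalar minimizations
\[
    \min_{v_j\in[-\tau_j,u_j]}\norm{a_{\ast,j}}_2^2(v_j-c_j)^2, \qquad j=1,\ldots,m.
\]
This reduces the entire problem to a one-dimensional analysis in each coordinate.

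For fixed $j$, the quadratic $\norm{a_{\ast,j}}_2^2(\cdot-c_j)^2$ attains its unconstrained minimum at $c_j$, and from the formula for $c_j$ one has $c_j\geq-\tau_j$ since $|\langle b,a_{\ast,j}\rangle|/\norm{a_{\ast,j}}_2^2\geq 0$. Hence I would distinguish two cases on the position of $c_j$ relative to $[-\tau_j,u_j]$: if $u_j\geq c_j$, the vertex lies in the interval and the minimizer is $v_j=c_j$, yielding a contribution of $0$; if $-\tau_j\leq u_j<c_j$, the quadratic is monotonically decreasing on $[-\tau_j,u_j]$, so the minimizer is the right endpoint $v_j=u_j$, yielding $\norm{a_{\ast,j}}_2^2(u_j-c_j)^2$. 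In both cases the minimizing coordinate is exactly $(Pu)_j=\min\{c_j,u_j\}$, which gives $g(u)=h_G(Pu)$ and, after substitution, the explicit formula \eqref{explgupart}.

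It remains to verify the $\scriptC^1$-regularity on $\prod_{j=1}^n(-\tau_j,+\infty)$ and the derivative formula \eqref{dergpart}. Since the explicit formula for $g$ is a sum of functions each depending on a single $u_j$, only the $j$-th summand contributes to $\partial g/\partial u_j$. Differentiating on the open set $u_j\in(-\tau_j,c_j)$ gives $2\norm{a_{\ast,j}}_2^2(u_j-c_j)$, while on $u_j\in(c_j,+\infty)$ the derivative vanishes. The potential obstacle is the junction at $u_j=c_j$: I would check that the left and right partial derivatives both equal $0$ there, which they do since $2\norm{a_{\ast,j}}_2^2(u_j-c_j)\to 0$ as $u_j\to c_j^-$. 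This gives continuity of $\partial g/\partial u_j$ and hence $g\in\scriptC^1$, with the derivative expressed compactly by the indicator $\chi_{[-\tau_j,c_j]}(u_j)$ as in \eqref{dergpart}. The main (and essentially only) subtlety is this $\scriptC^1$-matching at the boundary of the indicator's support; everything else is a straightforward decoupling argument.
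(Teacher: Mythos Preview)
Your proposal is correct and follows essentially the same route as the paper: decouple the minimization over $Q(u)$ into $m$ one-dimensional problems using the separable form \eqref{trdchofhG}, identify the coordinatewise minimizer as $(Pu)_j=\min\{c_j,u_j\}$, and read off \eqref{explgupart} and \eqref{dergpart}. If anything, you are slightly more explicit than the paper in checking the $\scriptC^1$-matching at $u_j=c_j$, which the paper simply declares ``obvious by the expression \eqref{explgupart}.''
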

\begin{proof}
	Obviously, $P$ is a projection of $\prod_{j=1}^n[-\tau_j,+\infty)$ onto $Q(c)$. For all $j=1,\ldots,m$,
	\[
		\text{argmin}_{-\tau_j\leq v_j\leq u_j}\norm{a_{\ast,j}}_2^2(v_j-c_j)^2=\begin{cases}
			u_j & \text{if $-\tau_j\leq u_j\leq c_j$},\\
			c_j & \text{otherwise}
		\end{cases}=(Pu)_j.
	\]
	Hence,
	\[\begin{split}
		g(u)&=\min_{v\in Q(u)}h_G(v)=\sum_{j=1}^m\min_{-\tau_j\leq v_j\leq u_j}\norm{a_{\ast,j}}_2^2(v_j-c_j)^2=\\
		&=\sum_{j=1}^m\norm{a_{\ast,j}}_2^2((Pu)_j-c_j)^2=h_G(Pu).
	\end{split}
	\]
	The explicit definition of $Pu$ gives (\ref{explgupart}) and (\ref{dergpart}). The differentiability and formula (\ref{dergpart}) are obvious by the expression (\ref{explgupart}) of $g$.
\end{proof}

\begin{oss}
	As a consequence of Theorem \ref{thmgupart},
	\[
	p^\ast=g(0)=\sum_{j=1}^m\norm{a_{\ast,j}}_2^2\Big(-\tau_j+\frac{|\langle b,a_{\ast,j}\rangle|}{\norm{a_{\ast,j}}_2^2}\Big)^2\chi_{[-\tau_j,c_j]}(0).
	\]
	Then, observe that:
	\begin{equation}\label{formulachemiserve}
		-\tau_j\leq 0\leq -\tau_j+\frac{|\langle b,a_{\ast,j}\rangle|}{\norm{a_{\ast,j}}_2^2} \ \ \ \Longleftrightarrow \ \ \ 0\leq \tau_j\leq \frac{|\langle b,a_{\ast,j}\rangle|}{\norm{a_{\ast,j}}_2^2},
	\end{equation}
	so that:
	\[
		p^\ast=\sum_{j=1}^m\norm{a_{\ast,j}}_2^2\Big(-\tau_j+\frac{|\langle b,a_{\ast,j}\rangle|}{\norm{a_{\ast,j}}_2^2}\Big)^2\chi_{\Big[0,\frac{|\langle b,a_{\ast,j}\rangle|}{\norm{a_{\ast,j}}_2^2}\Big]}(\tau_j).
	\]
\end{oss}

\begin{theorem}\label{thmOptParpart}
	Under the notation of this subsection, the vector $\lambda^\#\in[0,+\infty)^m$ given by
	\[
		\lambda_j^\#=2\norm{a_{\ast,j}}_2^2\Big(\frac{|\langle b,a_{\ast,j}\rangle|}{\norm{a_{\ast,j}}_2^2}-\tau_j\Big)^+
	\]
	defines a vector of Lagrange multipliers for (\ref{eq21capovolta}).
\end{theorem}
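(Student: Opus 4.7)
The plan is to translate the geometric picture developed in the preceding lemmas directly into Lagrange multipliers, invoking the separating-hyperplane idea emphasized in the remark closing Section~\ref{subsec:LF}. By Lemma~\ref{lemmapropg} and Theorem~\ref{thmgupart}, the set $\scriptA$ associated with (\ref{eq21capovolta}) is the epigraph of the convex nonnegative function $g$ given explicitly in (\ref{explgupart}), while $\scriptB$ is the open vertical half-line $\{0\}\times(-\infty,p^\ast)$ with $p^\ast=g(0)$. Hence any supporting hyperplane to $\mathrm{epi}(g)$ at the boundary point $(0,g(0))$ will automatically separate $\scriptA$ from $\scriptB$, and the construction in the proof of Theorem~\ref{thm1} identifies its coefficients with Lagrange multipliers for (\ref{eq21capovolta}), up to a positive scaling.

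First, I would construct such a supporting hyperplane explicitly. Formula (\ref{dergpart}) shows that $g$ is of class $\scriptC^1$ on a neighbourhood of $0$, so its subdifferential at the origin reduces to the singleton $\{\nabla g(0)\}$. I would then take
\[
    \Gamma := \Big\{(u,t)\in\doubleR^m\times\doubleR \ : \ \langle -\nabla g(0),u\rangle + t = p^\ast\Big\}
\]
and verify, via the subgradient inequality $g(u)\geq g(0)+\nabla g(0)\cdot u$, that $\langle -\nabla g(0),u\rangle + t\geq p^\ast$ for every $(u,t)\in\scriptA$, whereas $\langle -\nabla g(0),0\rangle + s = s < p^\ast$ for every $(0,s)\in\scriptB$. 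This confirms that $\Gamma$ is the separating hyperplane required by Theorem~\ref{HBtheorem}.

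Next, following the proof of Theorem~\ref{thm1}, the coefficients $(\tilde\lambda,\mu)=(-\nabla g(0),1)$ satisfy $\mu=1>0$, so $\lambda^\#=\tilde\lambda/\mu=-\nabla g(0)$ is a vector of Lagrange multipliers for (\ref{eq21capovolta}). Evaluating (\ref{dergpart}) at $u=0$ and recalling $c_j=-\tau_j+|\langle b,a_{\ast,j}\rangle|/\norm{a_{\ast,j}}_2^2$ gives
\[
    \lambda_j^\# = -\frac{\partial g}{\partial u_j}(0) = 2\norm{a_{\ast,j}}_2^2\, c_j\, \chi_{[-\tau_j,c_j]}(0),
\]
and the equivalence (\ref{formulachemiserve}) rewrites the indicator as $\chi_{[0,|\langle b,a_{\ast,j}\rangle|/\norm{a_{\ast,j}}_2^2]}(\tau_j)$, so the expression collapses to the positive-part formula in the statement.

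The main obstacle will be essentially cosmetic: checking that the indicator automatically enforces $\lambda_j^\#\geq 0$, since $c_j\,\chi_{[-\tau_j,c_j]}(0)$ vanishes whenever $c_j<0$ and coincides with $c_j\geq 0$ otherwise, which is exactly what the positive-part notation records. The substantive work has already been absorbed into Theorem~\ref{thmgupart}; what remains is sign bookkeeping and the verification that $\Gamma$ realizes the abstract separating hyperplane in the geometric scheme of Section~\ref{subsec:LF}.
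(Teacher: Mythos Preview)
Your proposal is correct and follows essentially the same approach as the paper: both identify the tangent hyperplane to $\mathrm{epi}(g)$ at $(0,g(0))$ as the required separating hyperplane and read off $\lambda^\#=-\nabla g(0)$ via (\ref{dergpart}) and (\ref{formulachemiserve}). Your version is simply more explicit, spelling out the subgradient-inequality verification and the sign check that the paper leaves to the reader.
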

\begin{proof}
	We apply (\ref{dergpart}) to $u=0$ and use (\ref{formulachemiserve}). Namely,
	\[
		t=p^\ast+\langle\nabla g(0),u\rangle
	\]
	is the tangent hyperplane of $g$ in $u=0$, which is also the hyperplane that separates $\scriptA$ and $\scriptB$. The direction of this hyperplane is $(\nabla g(0),-1)$, so that: 
	\[
		\lambda^\#=-\nabla g(0),
	\]
	i.e. the assertion.
\end{proof}

\begin{oss}
As far as the original problem (\ref{eq:01}) with $m\leq n$ is concerned, we get the Lagrange multipliers for free by Theorem \ref{thmOptParpart} simply observing that if $A=(a_{\ast 1}|\ldots|a_{\ast m}|0|\ldots|0)\in\doubleR^{m\times n}$,  $A'=(a_{\ast 1}|\ldots|a_{\ast m})$ and $x=(x',x'')\in\doubleR^m\times\doubleR^{n-m}$, then
\[\begin{split}
	\min_{x\in\doubleR^n}\norm{Ax-b}_2^2+\sum_{j=1}^n\lambda_j(|x_j|-\tau_j)=&\min_{x'\in\doubleR^m}\norm{A'x'-b}_2^2+\sum_{j=1}^m\lambda_j(|x'_j|-\tau_j)+\\
	&+\underbrace{\min_{x''\in\doubleR^{n-m}}\sum_{j=m+1}^n\lambda_j(|x_j|-\tau_j)}_\text{$=-\sum_{j=m+1}^n\lambda_j\tau_j$},
\end{split}
\]
so that, if $\lambda^\#\in\doubleR^m$ defines a vector of Lagrange multipliers for (\ref{eq21capovolta}), then $(\lambda^\#|0)\in\doubleR^m\times\doubleR^{n-m}$ defines a vector of Lagrange multipliers for (\ref{eq:01}).
\end{oss}

\subsection{Explicit solution}\label{subsec:es}
	The conditions $|x_j|\leq\tau_j$ are equivalent to $x_j^2\leq\tau_j^2$. Under this point of view, (\ref{eq:01}) can be restated as:
	\begin{equation}\label{Tikho}
			\text{minimize} \quad \norm{Ax-b}_2^2,\qquad x_j^2\leq\tau_j^2, 
	\end{equation}
	that can be interpreted as a \textcolor{black}{weighted} Tikhonov problem. Assume that $\lambda^\#$ is a vector of Lagrange multipliers for (\ref{eq:01}) or, equivalently, for (\ref{Tikho}). We are interested in computing
	\[
		x^\#=\arg\min_xL(x,\lambda),
	\]
	where $L$ is the Lagrange function associated to (\ref{Tikho}), i.e.
	\[
			L(x,\lambda^\#)=\norm{Ax-b}_2^2+\sum_{j=1}^n\lambda_j^\#(x_j^2-\tau_j^2).
	\]
	Since $L\in\scriptC^\infty(\doubleR^n)$ and it is convex, they satisfy $\nabla L(x,\lambda^\#)=0$, that is:
	\[
		(A^TA+\Delta_\lambda)x=A^Tb,
	\]
	where $\Delta_\lambda=\diag(\lambda_1^\#,...,\lambda_n^\#)$. Hence, $x^\#$ satisfies:
	\begin{equation}\label{xsharp}
		(A^TA+\Delta_\lambda)x^\#=A^Tb, 
	\end{equation}
	that is, $x^\#\in (A^TA+\Delta_\lambda)^{-1}A^Tb$. 
	
	\begin{oss}
		Another way to compute the Lagrange multipliers associated to (\ref{eq:01}), or equivalently to (\ref{Tikho}), can be by means of strong duality condition, namely using:
		\[
			\lambda^\#=\arg\max_{\lambda\succeq0}\min_x L(x,\lambda).
		\] 
		However, we stress that the explicit value of $\min_x L(x,\lambda)$ is still hard to compute since the implicit relation (\ref{xsharp}) satisfied by $x^\#$ cannot be made explicit by means of Dini's theorem.
	\end{oss}
 
\section{Considerations and conclusions}\label{sec:conclusions}
\subsection{Applications} Despite the apparently heavy assumptions on $A$, Theorem \ref{thmOptParpart} has itself interesting applications. For instance, it can be applied to denoising problems, where $A=I_{n\times n}$, i.e. problems in the form:
		\begin{equation}\label{denoising}
			\text{minimize}\quad \norm{x-b}_2^2, \qquad x\in\doubleR^n, \ |x_j|\leq\tau_j, \ \text{$j=1,\ldots,n$}.
	\end{equation}
	By Theorem \ref{thmOptParpart}, $\lambda^\#=(\lambda^\#_j)_{j=1}^n$ is a vector of Lagrange multipliers for (\ref{denoising}), where:
	\begin{equation}\label{den1}
		\lambda^\#_j=2(|b_j|-\tau_j)^+.
	\end{equation}
	
	We can also apply Theorem \ref{thmOptParpart} to the discrete Fourier transform, i.e. given a noisy fully-sampled signal $b\in\doubleC^{n}$, we want to find a vector $z\in\doubleC^{n}$ such that $\norm{\Phi z-b}_2^2$ is minimized under the constrains $|z_j|\leq \tau_{j}$, where $\Phi\in\doubleC^{n \times n}$ denotes the (complex) DFT matrix. Since $\Phi^\ast\Phi=I_{n\times n}$, we can apply Theorem \ref{thmOptParpart} to deduce that a set of Lagrange multipliers for this problem is:
\[
	\lambda_j^\#=2\Big(|\langle b,\phi_{\ast,j}\rangle|-\tau_j\Big)^+,
\]
($j=1,\ldots,n$), being $\phi_{\ast,j}$ the $j$-th column of $\Phi$. 

The question that naturally arises in the applications is whether the dependence of $\lambda_1,\ldots,\lambda_n$ on  $\tau_1,\ldots,\tau_n$ can be a critical issue in the applicability of the theory. Indeed, $\tau_1,\ldots,\tau_n$ are upper bounds for $|x_1|,...,|x_n|$ respectively, which are not available in the practice. However, whenever it is possible to estimate these local upper bounds, our result may lead to high-quality imaging perfomances. For instance, for denoising, (\ref{den1}) may be approximated by replacing $\tau_1,\ldots,\tau_n$ with the voxel values obtained by applying a Gaussian filter (or other types of filtering) to the noisy image. This opens the question of which filtering technique could lead to optimal approximations of the $\tau_1,\ldots,\tau_n$ depending on the field of research in which (\ref{eq:01}) can be implemented. We intend to investigate this topic in the immediate future.

\subsection{Open problems}
As long as $A^TA$ is not a diagonal matrix, the geometries of the sets $\scriptG$ and $\scriptA$ of the points satisfying (\ref{eqdefG}) and (\ref{eqdefA}) respectively, become more involved, along with the possible casuistry. However, the general case in which $A^TA$ is not diagonal would be of great importance in applications. Indeed, as long as Lagrange multipliers are proved to act as effective tuning parameters, the behavior of Lagrange multipliers for the \textcolor{black}{weighted} LASSO problem (\ref{eq:11}) in terms of voxel-wise estimates would provide a way to control the tuning parameters via estimates of the $\tau_j$. \\

Another open problem is whether it is possible to apply the same procedure to compute the Lagrange multipliers for (\ref{lassetto}). Clearly, the corresponding sets $\mathcal{G}$ and $\scriptA$ lie in $\doubleR^2$ so that $g:\doubleR\to\doubleR$. Despite this simplifying fact, the set $\scriptG$ is characterized by:
\[
	\begin{cases}
		u = s(x)^Tx-\tau,\\
		t = \norm{Ax-b}_2^2
	\end{cases} \qquad \text{for some $x\in\doubleR^n$},
\]
where $s(x)\in\doubleR^n$ is a vector such that $\diag(s(x)_j) \in \sgn(x)$ and, in this case, $u$ and $x$ belong to different spaces and a closed form for $t=t(u)$ is even more difficult to provide. \\

Finally, we stress that it would be important to generalize (\ref{eq:01}) up to consider different inner products on $\doubleR^n$. Namely, this is the situation that occurs in MRI when the undersampling pattern is non-cartesian. Problem (\ref{eq:11}) in this case becomes:
\[
\min_x\norm{Ax-b}_W^2+\sum_j\lambda_j|x_j|,
\]
where
\[
\norm{x}_W=x^TW^TWx \qquad (x\in\doubleR^n),
\]
for a definite positive diagonal matrix $W$. Since this topic falls beyond the purpose of this work, we limit ourselves to mention the very mathematical reason why the weighted norm shall definitely replace the Euclidean norm over $\doubleR^n$ when sampling is not performed on a cartesian grid. Indeed, non-cartesian sampling patterns require appropriate discretizations of the Fourier transform integral. Roughly speaking,
\[
\hat f(\xi)\thickapprox \sum_j f(x_j)e^{-2\pi i\xi\cdot x_j}\Delta x_j=\langle f,e^{2\pi i\xi\cdot }\rangle_W,
\]
where $\Delta x_j$ is the Lebesgue measure of an adequate neighborhood of $x_j$, weighting the contribution of the sample $x_j$, and $W$ is the diagonal matrix whose entries are $\sqrt{\Delta x_j}$. The inversion formula of the Fourier transform shall be modified accordingly. For instance, if the sampling follows a spiral trajectory, $\Delta x_j$ shall be bigger the further $x_j$ is from the origin, since this value serves as an avarage of $f$ on a portion of sphere that is larger as $x_j$ is far from the origin. All the above-mentioned problems will be object of our future investigations. 

\section*{Acknowledgements}
Financial support for this work has been provided by University of Bologna, the Institute of Systems Engineering at HES-SO Valais-Wallis and Schweizerischer Nationalfonds zur Förderung der Wissenschaftlichen Forschung. We are thankful to the discussion with Prof. Nicola Arcozzi, Dr. Luca Calatroni, Dr. Fabian Pedregosa and Mr. Pasquale Sirignano, who contributed to highly improve the quality of the manuscript. We are also very grateful to Prof. Micah M. Murray for his support. We acknowledge the support of The Sense Innovation and Research Center, a joint venture of the University of Lausanne (UNIL), The Lausanne University Hospital (CHUV), and The University of Applied Sciences of Western Switzerland – Valais/Wallis (HES-SO Valais/Wallis).

\section*{Authors contributions}
B.F. conceptualised the problem. B.F. and G.G. framed the hypothesis to be proved and tested. G.G. developed the proofs under B.F.'s supervision and drafted a first version of the manuscript, and B.F. shaped the article in its final version. The idea of adding the Appendix is due to B.M., who has perfected the proof of the existence provided by G.G. and contributed to the draft of the manuscript.

\section*{Declarations} The authors have not disclosed any competing interests.

\section*{Appendix}
Since we did not find a direct proof in the existing literature, we provide a formal proof of the existence of the minimizer of the generalized LASSO problem:
\begin{equation}\label{genLASSOgen}
\arg\min_{x\in \mathds{R}^n}\norm{Ax-b}_2^2+\lambda\norm{\Phi x}_1,
\end{equation}
where $b\in\mathds{R}^m$, $A\in\mathds{R}^{m\times n}$ and $\Phi\in\mathds{R}^{N\times n}$.

We state the result in the general framework of finite-dimensional vector spaces: we denote by $X,Y,Z$ three finite-dimension real vector spaces. We denote by $\langle \cdot,\cdot\rangle_X$ an inner product on $X$ and with $\norm{\cdot}_X$ the induced norm. Analogous notation will be used for $Y$, whereas we set:
\[
\norm{z}_p=\left(\sum_{j=1}^{\text{dim}(Z)}|z_j|^p\right)^{1/p},\qquad z\in Z
\]
for $0<p\leq\infty$. Recall that $\norm{\cdot}_p$ is a Banach quasi-norm (meaning that there exists $C_p\geq1$ such that $\norm{x+y}_p\leq C_p(\norm{x}_p+\norm{y}_p)$ for all $x,y\in Z$) replaces the triangular inequality for $0<p<1$, and it is a norm for $1\leq p\leq\infty$. 

Then, for given $\lambda\geq0$, $b\in Y$, $A:X\to Y$ and $\Phi:X\to Z$ linear, we define for all $x\in X$,
\[
f(x)=\norm{Ax-b}_Y^2+\lambda\norm{\Phi x}_p.
\]

\begin{theorem*}
For all $0<p\leq\infty$, $\lambda\geq0$ there exists $x^\#\in X$ such that
\[
\inf_{x\in X}f(x)=f(x^\#).
\]
In particular, the generalized LASSO problem (\ref{genLASSOgen}) has at least one solution.
\end{theorem*}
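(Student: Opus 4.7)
The plan is to apply the direct method of the calculus of variations adapted to the finite-dimensional setting. Since $f$ is everywhere finite and continuous on $X$ (both $\norm{A\cdot-b}_Y^2$ and $\norm{\Phi\cdot}_p$ are continuous, the latter even when $0<p<1$ where $\norm{\cdot}_p$ is only a quasi-norm), it suffices to show that some minimizing sequence admits a bounded subsequence; Bolzano--Weierstrass on a finite-dimensional space then yields a convergent subsequence whose limit, by continuity, attains the infimum.

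First I would identify the flat directions of $f$. Let $N:=\ker(A)\cap\ker(\Phi)$; then $f(x+n)=f(x)$ for every $n\in N$, because both $Ax$ and $\Phi x$ are left invariant. Consequently, if $P:X\to N^\perp$ denotes the orthogonal projection with respect to $\langle\cdot,\cdot\rangle_X$, then $f(x)=f(Px)$ for all $x\in X$, and it suffices to prove that $f|_{N^\perp}$ attains its infimum.

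Next I would establish coercivity on $N^\perp$. Define $\varphi(x):=\norm{Ax}_Y+\norm{\Phi x}_p$ for $x\in N^\perp$; this map is continuous and positively homogeneous of degree one, and $\varphi(x)=0$ forces $Ax=0$ and $\Phi x=0$, i.e.\ $x\in N\cap N^\perp=\{0\}$. On the compact unit sphere $S:=\{x\in N^\perp : \norm{x}_X=1\}$ the minimum $c:=\min_{x\in S}\varphi(x)$ is therefore strictly positive, so $\varphi(x)\geq c\norm{x}_X$ for every $x\in N^\perp$. In particular, for $\norm{x}_X$ large, either $\norm{Ax}_Y\geq \tfrac{c}{2}\norm{x}_X$, in which case $\norm{Ax-b}_Y^2\geq(\norm{Ax}_Y-\norm{b}_Y)^2$ grows quadratically, or $\norm{\Phi x}_p\geq\tfrac{c}{2}\norm{x}_X$, in which case $\lambda\norm{\Phi x}_p$ grows linearly (provided $\lambda>0$; the case $\lambda=0$ reduces to classical least squares and is handled identically with $N$ replaced by $\ker A$). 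Either way $f(x)\to+\infty$ as $\norm{x}_X\to+\infty$ within $N^\perp$.

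Combining the two steps, any minimizing sequence $(x_k)\subset X$ may be replaced by $(Px_k)\subset N^\perp$ without changing the values of $f$; coercivity makes $(Px_k)$ bounded, a convergent subsequence exists by finite dimensionality, and continuity of $f$ delivers a minimizer $x^\#$. The only delicate point, and the place I expect the main obstacle, is the quasi-norm regime $0<p<1$, since one cannot invoke the usual norm-equivalence or triangle inequality. The escape is that the compactness argument only needs continuity, positive homogeneity, and positive definiteness of $\norm{\cdot}_p$, all of which hold for every $p\in(0,\infty]$, so the proof goes through uniformly in $p$.
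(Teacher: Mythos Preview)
Your proof is correct and follows essentially the same route as the paper: identify the flat subspace $N=\ker(A)\cap\ker(\Phi)$, project onto $N^{\perp}$, establish coercivity there via a strictly positive minimum on the unit sphere, and conclude by compactness and continuity. The only cosmetic differences are that the paper uses the sphere minimum of $\tfrac12\norm{Ax}_Y^2+\lambda\norm{\Phi x}_p$ rather than your homogeneous $\varphi(x)=\norm{Ax}_Y+\norm{\Phi x}_p$, and it disposes of the cases $\lambda=0$ and $\Image(A)=\{0\}$ or $\Image(\Phi)=\{0\}$ separately at the outset instead of folding them into the same scheme.
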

\begin{proof}
Clearly, the function $f(x)=\norm{b}_Y^2+\lambda\norm{\Phi x}_p$ attains its minimum in $x^\#=0$. Hence, we may assume that
\[
\Image(A)\neq\{0\}.
\]
Since $Image(A)$ is a vector subspace of $Y$, for all $b\in Y$ there exists a unique $y^\#\in Image(A)$ such that $\inf_{y\in Y}\norm{y-b}_Y^2=\norm{y^\#-b}_Y^2$. By definition, $y^\#=Ax^\#$ for some $x^\#\in X$. Hence,
\[
\inf_{x\in X}\norm{Ax-b}_Y^2=\inf_{y\in Image(A)}\norm{y-b}_Y^2=\min_{y\in Y}\norm{y-b}_2^2=\norm{Ax^\#-b}_Y^2
\]
and the assertion follows also for the case in which $\lambda=0$ or $Image(B)=\{0\}$. We will thereby assume that $\Image(A)\neq\{0\}$, $\Image(B)\neq\{0\}$ and $\lambda>0$. Let $L:=\text{ker}(A)\cap\text{ker}(B)=\{x\in X : Ax=0, \Phi x=0\}$ and denote the closed ball of $X$ of center $0$ and radius $r>0$ by $B_X(0,r)=\{x\in X: \norm{x}_X\leq r\}$. The rest of the proof is devided into three graded steps.

\paragraph{Step 1.} \textbf{We prove that if $L=\{0\}$, then $\lim_{\norm{x}_X\to+\infty}f(x)=+\infty$.}

By convexity of $\norm{\cdot}_Y$, 
\[
\norm{y_1}_Y^2\leq2(\norm{y_1-y_2}_Y^2+\norm{y_2}^2_Y)
\]
for all $y_1,y_2\in Y$. Therefore,
\[
\norm{Ax-b}_2^2\geq\frac{1}{2}\norm{Ax}_Y^2-\norm{b}_Y^2,
\]
so that:
\[
\norm{Ax-b}_Y^2+\lambda\norm{\Phi x}_p\geq\frac{1}{2}\norm{Ax}_Y^2+\lambda\norm{\Phi x}_p-\norm{b}_Y^2.
\]
Let
\[
\mathds{S}_X:=\{x\in X:\norm{x}_X=1\}
\]
denote the unit sphere of $X$ and set $\eta:=\min_{x\in \mathds{S}_X}\frac{1}{2}\norm{Ax}_Y^2+\lambda\norm{\Phi x}_p$. If $\eta=0$, then,
\[
\min_{x\in \mathds{S}_X}\frac{1}{2}\norm{Ax}_Y^2+\lambda\norm{\Phi x}_p=0,
\]
together with the assumptions on $\Image(A)$, $\Image(B)$ and $\lambda$, yields to the existence of $x^\#\in \mathds{S}_X$ such that $\frac{1}{2}\norm{Ax^\#}_Y^2+\lambda\norm{\Phi x^\#}_p=0$. But $\norm{\cdot}_Y$ and $\norm{\cdot}_p$ are (quasi-)norms, so $x^\#=0\notin\mathds{S}_X$. This is a contradiction. Hence, $\eta>0$. 

Next, for $\norm{x}_X>1$, we have:
\begin{align*}
\frac{1}{2}\norm{Ax}_Y^2+\lambda\norm{\phi x}_p-\norm{b}_Y^2&=\frac{1}{2}\norm{x}_X^2\norm{A\frac{x}{\norm{x}_X}}_Y^2+\lambda\norm{x}_X\norm{\Phi\frac{x}{\norm{x}_X}}_p-\norm{b}_Y^2\\
&>\norm{x}_X\left(\frac{1}{2}\norm{A\frac{x}{\norm{x}_X}}_Y^2+\lambda\norm{B\frac{x}{\norm{x}_X}}_p\right)-\norm{b}_Y^2\\
&\geq\eta\norm{x}_X-\norm{b}_Y^2.
\end{align*}
Therefore, for all $x\in X$ such that $\norm{x}_X>1$,
\[
f(x)=\norm{Ax-b}_Y^2+\lambda\norm{\Phi x}_p>\eta\norm{x}_X+\norm{b}_Y^2
\]
and the assertion follows, since $\eta>0$ implies that the right hand-side goes to $+\infty$ as $\norm{x}_X\to+\infty$.
\paragraph{Step 2.} \textbf{We prove the assertion for $L=\{0\}$.}

 Let $m:=\inf_{x\in X}f(x)$. By Step 1, there exist $R>0$ such that $f(x)>m+1$ for $\norm{x}_X>R$. $B_X(0,R)$ is compact and convex, and $\inf_{x\in X}f(x)=\inf_{x\in B_X(0,R)}f(x)$ by definition of $R$. Let $(x_j)_j\subseteq B_X(0,R)$ be a minimizing sequence. By compactness, it admits a converging subsequence and, without loss of generality, we may assume that $\lim_{j\to+\infty}x_j=x^\#\in B_X(0,R)$. By continuity, $f(x^\#)=\lim_{j\to+\infty}f(x_j)=m$.

 \paragraph{Step 3.} \textbf{We prove the assertion for $L\neq\{0\}$.} 

 Recall that $X=L\oplus L^\perp$, where the orthogonality is defined with respect to the inner product $\langle \cdot,\cdot\rangle_X$. By definition of direct sum, for all $x\in X$ there exist unique $x_1\in L$ and $x_2\in L^\perp$ such that $x=x_1+x_2$. Observe that since $x_1\in L$,
 \[
 f(x)=\norm{Ax_2-b}_Y^2+\lambda\norm{\Phi x_2}_p=f(x_2).
 \]
In particular,
\[
\inf_{x\in X}f(x)=\inf_{x\in L^\perp}f(x).
\]
The restrictions of $A$ and $\Phi$ to $L^\perp$ are linear mappings between vector spaces. We denote them with $A|_{L^\perp}L^\perp\to Y$ and $\Phi|_{L^\perp}:L^\perp\to Z$ respectively and set $f|_{L^\perp}:L^\perp\to Y$ as the restriction of $f$ to $L^\perp$. Obviously,
\[
f|_{L^\perp}(x)=\norm{A|_{L^\perp}x-b}_Y^2+\lambda\norm{\Phi|_{L^\perp}x}_p=f(x)
\]
for all $x\in L^\perp$, so that:
\[
\inf_{x\in X}f(x)=\inf_{x\in L^\perp}f(x)=\inf_{x\in L^\perp}f|_{L^\perp}(x).
\]
Obviously,  
\[
    L^\perp:=\ker(A|_{L^\perp})\cap\ker(\Phi|_{L^\perp})=\ker(A)\cap\ker(B)\cap L^\perp =L\cap L^\perp=\{0\}.
\]
Therefore, by Step 2, it follows that there exists $x^\#\in L^\perp$ such that:
\[
\inf_{x\in L^\perp}f|_{L^\perp}(x)=f|_{L^\perp}(x^\#).
\]
This implies that:
\[
\inf_{x\in X}f(x)=f|_{L^\perp}(x^\#)=f(x^\#),
\]
since $x^\#\in L^\perp$.
\end{proof}

\end{document}